\newcommand{\on}{\operatorname}
\newcommand{\F}{\mathbb F}
\newcommand{\N}{\mathbb N}
\newcommand{\Z}{\mathbb Z}
\newcommand{\Q}{\mathbb Q}
\newcommand{\smprod}{\textstyle\prod\limits}
\newcommand{\set}[1]{\{ #1 \}}
\newcommand{\ev}{\on{ev}}
\newcommand{\ol}{\overline}
\newcommand{\Res}{\on{Res}}
\newcommand{\fa}{\mathfrak{a}}
\newcommand{\fp}{\mathfrak{p}}
\newcommand{\mcO}{\mathcal{O}}
\newcommand{\eps}{\varepsilon}
\renewcommand{\phi}{\varphi}
\title{Indiscreet logarithms\\ in finite fields of small characteristic}
\author{Robert Granger\inst{1}%
  \thanks{Supported by the Swiss National Science Foundation via grant
    number 200021-156420.}
  \and Thorsten Kleinjung\inst{2}%
  \and Jens Zumbr\"agel\inst{1}%
}
\institute{Laboratory for Cryptologic Algorithms\\
  \'Ecole polytechnique f\'ed\'erale de Lausanne, Switzerland
  \and
  Institute of Mathematics, Universit\"at Leipzig, Germany\\
  \email{\{robert.granger,thorsten.kleinjung,jens.zumbragel\}@epfl.ch}}
\begin{document}

\maketitle

\begin{abstract}
  Recently, several striking advances have taken place regarding the
  discrete logarithm problem (DLP) in finite fields of small
  characteristic, despite progress having remained essentially static
  for nearly thirty years, with the best known algorithms being of
  subexponential complexity.  In this expository article we describe
  the key insights and constructions which culminated in two
  independent quasi-polynomial algorithms.  To put these developments
  into both a historical and a mathematical context, as well as to
  provide a comparison with the cases of so-called large and medium
  characteristic fields, we give an overview of the state-of-the-art
  algorithms for computing discrete logarithms in all finite fields.
  Our presentation aims to guide the reader through the algorithms and
  their complexity analyses \emph{ab initio}.

  \keywords{discrete logarithm problem, finite fields,
    quasi-polynomial algorithms}
\end{abstract}


\setcounter{footnote}{0}


\section{Introduction}

Given a finite cyclic group $(G, \cdot)$, a generator $g \in G$ and
another group element $h \in G$, the discrete logarithm problem (DLP)
is the computational problem of finding an integer~$x$ such that $g^x
= h$.  The integer~$x$ -- denoted by $\log_g h$ -- is uniquely
determined modulo the group order and is called the \emph{discrete
  logarithm} of~$h$ to the base~$g$.  The most natural example is the
multiplicative group $\F_p^{\times}$ of the field $\F_p$ of integers
modulo a prime~$p$, which is a cyclic group in which the DLP is
believed to be hard in general.  Besides this classical case, several
other groups have been extensively studied, including the
multiplicative group $\F_q^{\times}$ of any finite field $\F_q$ of
prime power order $q = p^n$, the group $E(\F_q)$ of $\F_q$-rational
points on an elliptic curve~$E$, or the group~$J_C(\F_q)$ associated
with the Jacobian $J_C$ of a (usually hyperelliptic) curve~$C$ of
genus~$>1$.

The study of discrete logarithms can be traced back to at least two
centuries ago, when they appeared in Gau\ss' \emph{Disquisitiones
  Arithmeticae} and were referred to as \emph{indices}
\cite[art. 57--60]{Gauss}.  Beyond its relevance as a natural
computational problem, the study of the DLP really came of age with
the advent of public-key cryptography in~1976, as the hardness of the
DLP (originally in~$\F_p^{\times}$) forms the basis of the famous
Diffie-Hellman protocol~\cite{DH76} and other cryptographic
primitives.

To describe the hardness of the DLP in a group of order~$N$ one
usually considers the asymptotic complexity in the input size of the
problem, which is proportional to $\log N$.  To indicate the
complexity, it has become customary to use the notation
\[ L_N(\alpha, c) \,=\, \exp \big( (c+o(1)) (\log N)^{\alpha} 
(\log \log N)^{1-\alpha} \big) \,, \]
where $\alpha \in [0,1]$, $c > 0$ and $\log$ denotes the natural
logarithm; we may omit the subscript~$N$ when there is no ambiguity
and sometimes write $L(\alpha)$ to mean $L(\alpha, c)$ for some $c >
0$.  Note that $L(0) = (\log N)^{c + o(1)}$, which corresponds to
polynomial time, while $L(1) = N^{c + o(1)}$ corresponds to
exponential time.  An algorithm with a running time of $L(\alpha)$ for
some $0 < \alpha < 1$ is said to be of \emph{subexponential}
complexity.

Algorithms for solving the DLP can be broadly classified into two
classes.  One class consists of the so-called generic algorithms,
which do not exploit any representational properties of group elements
and may thus be applied to any group.  Generic algorithms, like
Pollard's rho method~\cite{Pol78}, necessarily have a running time of
$\Omega( \sqrt N)$~\cite{Nec94,Sho97b}, if~$N$ is prime; if~$N$ is
composite then one may reduce the original problem to the DLP in prime
order subgroups, using the Pohlig-Hellman algorithm~\cite{PH78}.  The
other class consists of the so-called index calculus algorithms, which
exploit a suitable representation of group elements, which imbues a
notion of smoothness.  A basic version of index calculus was analysed
by Adleman~\cite{Adl79} and shown to have complexity~$L(1/2)$.
Subsequently, more advanced index calculus algorithms with lower
complexity have been developed.

In particular, the first $L(1/3)$ algorithm for computing discrete
logarithms, published in 1984 by Coppersmith~\cite{Cop84}, targeted
binary finite fields.  Later, the number field sieve, originally
devised for the integer factoring problem~\cite{LL93}, was adapted for
the DLP in prime fields~\cite{Gor93,Sch93} and resulted again in an
$L(1/3)$ algorithm.  Inspired by the number field sieve, the function
field sieve was developed for computing discrete logarithms in small
(fixed) characteristic~\cite{Adl94,AH99,JL02,JL06}.  Regarding general
finite fields~$\F_{p^n}$, for $n \ll (\log p)^{1/2}$ the number field
sieve and for $\log p \ll n^{1/2}$ the function field sieve still have
complexity $L(1/3)$.  Finally, in 2006 the number field sieve was
generalised~\cite{JLSV06} to work also in the remaining cases, thereby
obtaining $L(1/3)$ algorithms for all families of finite field DLPs.
Some improvements of the medium number field sieve have been reported
recently, see e.g., \cite{BGGM15,BGK15}.

Regarding the case of small characteristic, dramatic advances have
recently taken place.  Indeed, nearly thirty years after Coppersmith's
algorithm the $L(1/3)$ barrier was broken in a series of remarkable
results, starting with the work of G\"olo\u{g}lu, Granger, McGuire and
Zumbr\"agel~\cite{GGMZ13}, and independently Joux~\cite{Jou14}.  The
approach of Joux led to the first heuristic quasi-polynomial
algorithm, i.e., one with running time $\exp( O((\log \log N)^2) ) =
(\log N)^{O(\log \log N)}$, which is in $L(o(1))$, due to Barbulescu,
Gaudry, Joux and Thom\'e~\cite{BGJT14}.  Independently, the approach
of G\"olo\u{g}lu \emph{et al}.\ was later developed into an
alternative algorithm by Granger, Kleinjung and Zumbr\"agel, which
also has quasi-polynomial complexity, but is rigorous for a large
family of fields~\cite{GKZ14b,GKZ15}.

In this article we describe the key insights and constructions
underlying the recent algorithms.  To put these developments into both
a historical and a mathematical context, as well as to provide a
comparison with the cases of so-called large and medium characteristic
fields, we give an overview of the state-of-the-art algorithms for
computing discrete logarithms in all finite fields.  All necessary
mathematical prerequisites will be briefly introduced along the~way.

As our focus is on DLP algorithms for finite fields, we cover generic
algorithms only briefly.  We refer to Odlyzko's paper~\cite{Odl00} or
the recent survey~\cite{JOP14} for an overview on these and other
general aspects of the DLP, including curve-based groups, which are
not covered here.

\paragraph{A remark on terminology.}

The cardinality of any finite field~$F$ is a prime power $|F| = p^n$,
where the prime~$p$ is called the \emph{characteristic} of~$F$, which
is the smallest positive integer~$m$ such that $m 1_F = 0$.
Conversely, given any prime power $q = p^n$ there exists a finite
field of size~$q$, which is unique up to isomorphism and is denoted
by~$\F_q$.

When considering families of finite fields of order~$q = p^n$, where
$p = L_q(\alpha)$ and $q \to \infty$, then different DLP algorithms
apply depending on~$\alpha$.  Accordingly, in the case $\alpha > \frac
2 3$ we speak of \emph{large} characteristic; for $\alpha \in (\frac 1
3, \frac 2 3)$ we speak of \emph{medium} characteristic; and for
$\alpha \in (0, \frac 1 3)$ we say that the characteristic is
\emph{medium-small}, while the boundary cases $\alpha = \frac 2 3$ and
$\alpha = \frac 1 3$ are special cases to be treated additionally.
Finally, in the case $\alpha = 0$, i.e., if~$p$ is of polynomial size
in $\log q$, we speak of finite fields of \emph{small} characteristic.
In particular, if $q = p$ is a prime or $q = p^n$ with~$n$ fixed, then
we are in (very) large characteristic $p = L_q(1)$, whereas if the
characteristic~$p$ is fixed, then we have small characteristic $p =
L_q(0)$; note however that $p = L_q(1)$ (or $p = L_q(0)$) does not
imply that~$n$ (or~$p$) has to be fixed.


Furthermore, for complexity considerations we make use of the notation
$f \approx g$ to indicate that $f / g \to 1$ as the argument goes to
infinity.

\paragraph{Outline.}

We discuss the DLP for general finite cyclic groups in
Section~\ref{sec:general} and briefly present the most common
cryptographic applications and generic algorithms.  The index calculus
method serves as a framework for all advanced DLP algorithms for
finite fields and will be described in Section~\ref{sec:icm}, where we
first present it abstractly in a general group and then give some
basic concrete instances.  Section~\ref{sec:nfs} is devoted to the
number field sieve, which is currently the fastest method for DLP in
both large and medium characteristic, while Section~\ref{sec:ffs} deals
with the function field sieve in finite fields of small (and
medium-small) characteristic. Section~\ref{sec:new} presents the
recent dramatic developments for small characteristic finite fields, 
as well as the key insights and ideas which led to them.
Finally, we conclude in Section~\ref{sec:conclusion}.


\section{The DLP in finite cyclic groups}\label{sec:general}

The discrete logarithm problem can be formulated for any finite cyclic
group; indeed, most cryptographic protocols using the DLP can be
formulated in this abstract setting.  We present in this section the
most important cryptographic applications and the common generic
algorithms.  Since our main focus is the DLP in finite fields to which
index calculus algorithms may be applied, we will here be rather brief
in our treatment.

Let $(G, \cdot)$ be a finite cyclic group of order~$N$ and let $g \in
G$ be a generator.  We assume that the group operation can be computed
efficiently, i.e., in polynomial time, and that the group order is
known.  The surjective group homomorphism $\Z \to G$, $x \mapsto g^x$
induces a group isomorphism
\[ \psi \colon \Z / N \Z \to G \,, \quad [x] \mapsto g^x \,, \]
with inverse map $\psi^{-1} = \log_g \colon G \to \Z / N \Z$, $h
\mapsto \log_g h$.  The map $\psi$ can be computed efficiently by
using a square-and-multiply method, whereas the computation of
$\psi^{-1}$ is believed to be a difficult problem in general; this is
of course the~DLP.  Indeed, for so-called \emph{generic algorithms},
in which one can only perform group operations and check equality of
elements, the DLP necessarily has an exponential running time of
$\Omega( \sqrt N)$ if $N$ is prime~\cite{Nec94,Sho97b}.  However, for
particular groups with concrete representations, faster algorithms
exist, and it is not proven in any such cases that the DLP is hard.

One should note that the DLP, as well as the integer factorisation
problem, can be solved in polynomial time using quantum
computers~\cite{Sho97a}, but in this article we confine ourselves to
the classical computational model.

\paragraph{Cryptographic applications.}

The difficulty of the DLP is nowadays widely used, e.g., for secure
communication over the Internet.  Virtually all public-key
cryptosystems in use today are based on either the integer
factorisation problem or the DLP.  Some common cryptographic protocols
using the DLP are briefly presented below.  In each case the group
$(G, \cdot)$ and a generator~$g \in G$ are assumed to be publicly
known.

In the Diffie-Hellman key-agreement protocol~\cite{DH76} two parties,
usually referred to as Alice and Bob, choose random integers~$a$
and~$b$, respectively, and exchange the group elements $g^a$ and $g^b$
over the public channel, hence they both can compute a common session
key $(g^b)^a = g^{ab} = (g^a)^b$.  Clearly, if the DLP in~$G$ is
feasible, then the key can be computed from the public information, so
it is essential that the DLP is hard.

The Diffie-Hellman protocol can be transformed into a public-key
encryption scheme as shown by ElGamal~\cite{ElG85}.  Indeed, if Bob
has announced his public key $g^b$ and Alice has a secret message $m
\in G$ for Bob, she chooses a random integer~$a$ and sends the pair
$(g^a, m (g^b)^a)$ to Bob, who can decrypt by computing $(g^a)^b =
(g^b)^a$.  Moreover, there are digital signature schemes based on the
DLP, e.g., the ElGamal~\cite{ElG85} and Schnorr~\cite{Sch91} signature
schemes.

More recently, the advent of pairing-based cryptography in 2000
provided new cryptographic functionalities, starting with
identity-based non-interactive key distribution~\cite{SOK00},
one-round tripartite key-agreement~\cite{Jou00} and identity-based
encryption~\cite{BF01}, followed by numerous others.  It also renewed
interest in the underlying DLPs and related security assumptions.

To summarise, the hardness of the DLP in any group in which any of the
above protocols is instantiated is essential to their security.  It
is therefore imperative that these DLPs continue to be studied.

\paragraph{Generic algorithms.}

We now present examples of generic algorithms for solving the DLP in
any finite cyclic group.  Suppose that we want to find the discrete
logarithm $\log_g h$ for a target element $h \in G$.  Recall that $N =
|G|$ is the group order.

In the \emph{Baby-Step-Giant-Step} method, attributed to Shanks, 
let $M = \lceil \sqrt N \rceil$.  One computes a table $\set{ (j,
  g^j) \mid j \in \set{ 0 \dots M\!-\!1 } }$ (baby steps) and sorts it
by the second component.  Then one computes $k = g^{-M}$, as well as
$h, h k, h k^2, \dots$ (giant steps) until a collision $h k^i = g^j$
is detected, in which case the output is $\log_g h = i M + j$.  The
method requires $O(\sqrt N)$ database entries and $O(\sqrt N \log N)$
group operations (or $O(\sqrt N)$ if hash tables are used).

\emph{Pollard's rho} method~\cite{Pol78} reduces the memory requirement
to a negligible amount while preserving the square-root running time.
Therefore, it is the preferred method in practice.  However, due to
its randomised nature the analysis is more intricate.  The idea is to
define pseudorandom sequences $(k_i)$ in~$G$ and $(a_i)$, $(b_i)$
in~$\Z / N \Z$ such that $g^{a_i} h^{b_i} = k_i$ holds for any~$i$.
If $k_j = k_{j+{\ell}}$ holds for some $j, \ell > 0$, then there
exists also~$i$ with $k_i = k_{2i}$, and such collisions can be easily
detected.  In this case, $\log_g h = \frac{a_{2i} - a_i} {b_i -
  b_{2i}}$ is found, provided that the denominator is invertible
modulo~$N$.

Moreover, there is the \emph{Pohlig-Hellman} method~\cite{PH78}, the
efficiency of which depends on the prime factorisation of $N =
p_1^{e_1} \cdot \ldots \cdot p_r^{e_r}$, which we assume is given.
Then one has $\Z / N \Z \cong \Z / p_1^{e_1} \Z \times \dots \times \Z
/ p_r^{e_r} \Z$ by the Chinese Remainder Theorem, hence $x = \log_g h$
corresponds to a tuple $(x_1, \dots, x_r)$ and one can consider the
DLP for each factor $x_i \in \Z / p_i^{e_i} \Z$ separately.
Furthermore, if $e_i > 1$, then $x_i \in \Z / p_i^{e_i} \Z$ can be
found by obtaining the ``p-ary'' digits of~$x_i$, one at a time
starting with the least significant digit, by solving a DLP of
order~$p_i$.  Therefore, the Pohlig-Hellman method essentially reduces
the DLP complexity in a group of order~$N$ to a group of order the
largest prime factor of~$N$, for which either the Baby-Step-Giant-Step
or Pollard's rho method may be applied.

We remark that, although we are mainly interested in the DLP in finite
fields -- to which more efficient index calculus algorithms apply --
to compute the DLP modulo the order of the full multiplicative group
it is often preferable to use the Pohlig-Hellman method for computing
the solution modulo the small prime power factors.


\section{The index calculus method}\label{sec:icm}

The index calculus method can be much more efficient than generic
algorithms for certain groups and representations, and is the template
for all subexponential DLP algorithms.  While we describe the
framework for any group, the details will depend on the concrete
representation of the group elements.

As before let $\log_g h$ be the target logarithm in a cyclic group~$G$
of order~$N$.  We choose a subset $F \subseteq G$ such that $\langle F
\rangle = G$ (often we have $g \in F$), called the \emph{factor base}.
The idea is to first obtain $\log_g f$ for all $f \in F$.  Let $A = \Z
/ N \Z$ and consider the surjective group homomorphism
\[ \phi \colon A^F \to G \,, \quad (e_f)_{f \in F} \mapsto 
\smprod_{f \in F} f^{e_f} \,. \]

The index calculus method consists of three phases.
\begin{enumerate}
\item {\bf Relation generation.} Find vectors $(e_f)_{f \in F}$ in
  $\ker \phi$, called \emph{relations}, thus generating a subset $R
  \subseteq \ker \phi$.
\item {\bf Linear algebra.} Compute an element $0 \ne (x_f)_{f \in F}
  \in R^{\perp}$, i.e., satisfying $\sum_{f \in F} x_f e_f = 0$ for
  all $(e_f)_{f \in F} \in R$.
\item {\bf Individual logarithm.} Find a preimage $(e_f)_{f \in F} \in
  \phi^{-1}(h)$, for then we have found $\log_g h = \sum_{f \in F} e_f
  \log_g f$, cf.~Lemma~\ref{lem:relations}.
\end{enumerate}

As the next result shows, provided that enough relations have been
found, the discrete logarithms of the factor base elements are
determined up to a scalar multiple.  In practice, the condition given
in the lemma is satisfied, if a few more relations than factor base
elements have been obtained.

\begin{lemma}\label{lem:relations}
  Suppose that $\on{span} R = \ker \phi$ and $(x_f)_{f \in F} \in
  R^{\perp}$, then there exists $\lambda \in A$ such that $x_f =
  \lambda \log_g f$ for all $f \in F$.
\end{lemma}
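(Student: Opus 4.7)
The plan is to reduce the statement to a concrete linear-algebra claim over the ring $A = \Z/N\Z$ by factoring~$\phi$ through the discrete-logarithm isomorphism. Writing $\ell_f = \log_g f$, the map~$\phi$ decomposes as $\phi = \psi \circ L$, where $\psi\colon A \to G$, $k \mapsto g^k$ is the isomorphism induced by $g$, and $L\colon A^F \to A$, $(e_f) \mapsto \sum_f e_f \ell_f$ is $A$-linear. Since $\psi$ is bijective, $\ker \phi = \ker L$, and the hypothesis $\operatorname{span} R = \ker \phi$ gives $R^\perp = (\ker L)^\perp$. The statement thus reduces to showing that $(\ker L)^\perp = A \cdot (\ell_f)_{f \in F}$.

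One inclusion is immediate: for any $(e_f) \in \ker L$ we have $\sum_f e_f \ell_f = 0$ in~$A$, so $(\ell_f)_{f \in F}$ itself lies in $(\ker L)^\perp$, and therefore so does every scalar multiple $\lambda \cdot (\ell_f)$.

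For the reverse inclusion, I would first exploit the assumption $\langle F \rangle = G$: this says $\phi$, and hence $L$, is surjective, so there exist coefficients $(a_f)_{f \in F} \in A^F$ with $\sum_f a_f \ell_f = 1$ in~$A$. Given $(x_f)_{f \in F} \in (\ker L)^\perp$, I would then set
\[ \lambda \,=\, \sum_{f \in F} a_f x_f \,\in\, A \]
and verify that $x_{f_0} = \lambda \ell_{f_0}$ for every $f_0 \in F$. The trick is to exhibit a tailor-made element of $\ker L$ that produces this relation when paired with $(x_f)$: define $v \in A^F$ by $v_{f_0} = a_{f_0}\ell_{f_0} - 1$ and $v_f = a_f \ell_{f_0}$ for $f \neq f_0$. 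A direct check gives $L(v) = \ell_{f_0}\bigl(\sum_f a_f \ell_f\bigr) - \ell_{f_0} = 0$, so $v \in \ker L$, and the orthogonality condition $\sum_f v_f x_f = 0$ rearranges exactly to $\lambda \ell_{f_0} - x_{f_0} = 0$.

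The only delicate point is that $A$ is merely a finite ring, not a field, so the usual ``dim $+$ codim'' argument or invocation of a basis for $\ker L$ is unavailable; the main task is therefore to package the surjectivity of~$L$ into the single Bezout-type identity $\sum a_f \ell_f = 1$ and to write down an explicit witness $v \in \ker L$ that forces the desired proportionality coordinate by coordinate. Everything else is formal manipulation.
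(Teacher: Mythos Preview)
Your argument is correct, and it takes a genuinely different route from the paper. The paper argues abstractly: it observes that $R^\perp = (\ker\phi)^\perp$, then invokes the Smith normal form to obtain a module isomorphism $(\ker\phi)^\perp \cong A^F/\ker\phi \cong A$, and finally notes that $(\log_g f)_{f\in F}$ lies in $R^\perp$; the conclusion that this vector \emph{generates} the cyclic module $R^\perp$ is left implicit in the isomorphism. Your approach instead unwinds everything concretely: you use the surjectivity of $L$ to produce a Bezout identity $\sum_f a_f\ell_f = 1$, define $\lambda = \sum_f a_f x_f$ explicitly, and then for each coordinate $f_0$ write down a specific kernel element $v$ whose orthogonality against $(x_f)$ forces $x_{f_0} = \lambda\ell_{f_0}$.

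What each buys: the paper's proof is shorter and more structural, but it relies on Smith normal form over $\Z/N\Z$ and is somewhat terse about why $(\log_g f)$ is a \emph{generator} of $R^\perp$ rather than merely an element. Your proof is entirely elementary (no Smith normal form), fully explicit about the value of $\lambda$, and makes the role of the generating hypothesis $\langle F\rangle = G$ completely transparent via the identity $\sum a_f\ell_f = 1$. Your remark that $A$ is only a ring, not a field, is well taken and justifies the need for this hands-on construction over a naive dimension count.
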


\begin{proof}
  It holds that $R^{\perp} = (\on{span} R)^{\perp} = (\ker
  \phi)^{\perp}$ and $A^F / \ker \phi \cong \on{im} \phi \cong A$,
  while $(\ker \phi)^{\perp} \cong A^F / \ker \phi$ as can be seen by
  considering the Smith normal form, hence $R^{\perp} \cong A$.  On
  the other hand, we have $(\log_g f)_{f \in F} \in R^{\perp}$;
  indeed, since $\log_g \colon G \to A$ is a group homomorphism, it
  holds $\sum_{f \in F} e_f \log_g f = \log_g( \prod_{f \in F} f^{e_f}
  ) = 0$ for all $(e_f)_{f \in F} \in R \subseteq \ker \phi$. \qed
\end{proof}

\paragraph{Sparse linear algebra.} 

After the relation generation step an $r \times m$-matrix~$M$ over $\Z
/ N \Z$ has been found, where~$m$ is the size of the factor base~$F$
and $r \approx m$ is the number of relations.  In order to obtain the
factor base logarithms we need a solution $0 \ne x \in (\Z / N \Z)^m$
of $M x = 0$.  Due to the relation generation method the matrix is
usually of low average row weight~$w$.  For such sparse matrices
iterative algorithms are available, most commonly used are the Lanczos
method~\cite{Lan50} (for an adaption to finite fields see, e.g.,
\cite{LO91}) or the Wiedemann algorithm~\cite{Wie86}.  Their cost is
dominated by repeated computations of matrix-vector products $M v$,
and the running time is $O(m^2 w)$ operations in $\Z / N \Z$.
Provided that $\log w = o(\log m)$ (and $\log \log N = o(\log m)$),
which is usually the case, this is of complexity~$m^{2 + o(1)}$.

We note that, as $O(m)$ divisions in $\Z / N \Z$ are necessary (when
using the Lanczos method), the group order~$N$ should avoid small
prime factors, therefore the Pohlig-Hellman reduction and Pollard's
rho algorithm should be used for the small prime power factors.  In
practice, for large-scale computations the linear algebra step poses
some challenges, as the iterative algorithms are not easily
parallelisable.  We also remark that so-called structured Gaussian
elimination (cf.~\cite{LO91,JL03}) can be used to decrease the matrix
dimension~$m$, while increasing the weight~$w$ only moderately.


\subsection{A variant permitting rigorous analysis}%
\label{ssec:egd}

The following variant of the index calculus method, proposed by Enge
and Gaudry~\cite{EG00} and subsequently refined by Diem~\cite{Die11},
is valuable for theoretical analysis.  With this variant one can
compute discrete logarithms, provided only that it is feasible to
express group elements as products over the factor base, as in the
individual logarithm step of the classical index calculus method.

As before, let $(G, \cdot)$ be a cyclic group of order~$N$, let $g \in
G$ be a generator and let $h \in G$ be the target element for the DLP.
Suppose that $F \subseteq G$ is a factor base of cardinality $|F| =
m$.  We choose $a, b \in \Z / N \Z$ uniformly and independently at
random and try to express $g^a h^b$ as a product $g^a h^b = \prod_{f
  \in F} f^{e_f}$.  Once more than~$m$ such expressions have been
found, we consider the matrix consisting of the collected rows
$(e_f)_{f \in F}$ over $\Z / N \Z$, and compute using invertible row
transformations a row echelon form, which contains a vanishing row.
Contrary to the classical index calculus method we do not require a
rank condition for this matrix.  Applying the invertible row
transformations also to the numbers~$a$ and~$b$, then considering the
vanishing row we obtain an identity $g^{a'} h^{b'} = 1_G$.  One can
show that~$b' \in \Z / N \Z$ is uniformly distributed, so that $b'$ is
coprime to~$N$ with high enough probability, in which case $\log_g h =
\frac{a'} {b'} \in \Z / N \Z$ has been found.

Instead of computing a row echelon form by a variant of the Gau\ss\
algorithm one may use sparse linear algebra techniques, which have an
improved running time, however their analysis is more difficult and
the above algorithm has to be modified~\cite{EG00}.  In particular, it
is then necessary to fulfil rank conditions for the generated matrix
following a technique from Pomerance~\cite{Pom87}.


\subsection{Basic concrete versions}

The class of groups to which the index calculus method is applicable
includes the multiplicative groups of prime fields and fields of small
fixed characteristic.  We describe for these cases a simple index
calculus method and provide a running time analysis, which also serves
as a basis for the more advanced index calculus algorithms.

Suppose that~$G$ is $\F_p^{\times}$, the multiplicative group of a
prime field $\F_p$, of order $N = p-1$, with a given generator
$g \in G$.  As factor base we choose
\[ F = \set{ f \mid f \le B,\, f \text{ prime} } \subseteq G \] for
some bound~$B$ (by slight abuse of notation, for $f \in \Z$ we denote
the class $[f] \in \F_p$ also by~$f$).  For simplicity, we assume that
$g \in F$ (otherwise, we include it into the factor base).  To
generate relations, for random $e \in \Z / N \Z$ we compute $g^e \in
\F_p$, lift it to an element in $\N$, and check by trial division
whether it has only prime divisors $\le B$.  If successful, we obtain
a relation $g^e \equiv \prod_{f \in F} f^{e_f} \bmod p$ in~$G$.  Once
enough relations (more than $|F|$) have been found, we compute $\log_g
f$ for all $f \in F$ by solving a linear system over~$\Z / N \Z$.
Finally, given a target element $h \in G$ we similarly obtain one more
relation of the form $h g^e = \prod_{f \in F} f^{e_f}$ to obtain
$\log_g h$.

Considering a finite field $\F_q = \F_{p^n}$ of fixed
characteristic~$p$, note that $\F_{p^n}$ is usually represented as a
quotient ring $\F_p[X] / ( I )$, where $I \in \F_p[X]$ is an
irreducible polynomial of degree~$n$.  For $G = \F_q^{\times}$, it is
then straightforward to adapt the basic index calculus method for
$\F_p^{\times}$ described above to the present situation.  In
particular, as factor base we choose all irreducible polynomials in
$\F_p[X]$ of some bounded degree~$b$, i.e.,
\[ F = \set{ f \mid f \in \F_p[X],\, \deg f \le b,\, f \text{
    irreducible} } \subseteq G \] 
(where we employ a similar abuse of notation).  It suffices in
practice to include only the monic polynomials into the factor base.
In fact, one may perform the discrete logarithm computation in
$\F_q^{\times} / \F_p^{\times}$, i.e., ignoring constants
in~$\F_p^{\times}$, to obtain $\log_g h$ modulo $\frac N {p-1}$.
Using the Pohlig-Hellman method with the fact that $p-1$ divides the
product of the small prime power divisors of the group order $N = p^n
- 1$, the remaining information of $\log_g h$ is deduced easily.


\subsection{Complexity analysis}

A positive integer is called \emph{$B$-smooth} if all its prime
divisors are~$\le B$.  The (heuristic) running time analysis for the
basic index calculus method in $\F_p^{\times}$, as well as for the more
advanced algorithms presented in Section~\ref{sec:nfs}, is based on
the following result on the asymptotic density of smooth numbers among
the integers.

\begin{theorem}[Canfield, Erd\H{o}s, Pomerance~\cite{CEP83}]%
  \label{thm:cop}\sloppy
  A uniformly random integer in $\set{1, \dots, M}$ is $B$-smooth with
  probability \[ P \,=\, u^{-u (1 + o(1))} \,, \quad
  \text{where} \quad u = \tfrac{\log M}{\log B} \,, \]
  provided that $3 \le u \le (1 - \eps) \frac{ \log M } { \log \log M }$
  for some $\eps > 0$.
\end{theorem}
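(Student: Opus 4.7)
\emph{Proof proposal.} Write $\Psi(M,B) := \#\{n \le M : n \text{ is } B\text{-smooth}\}$, so the probability in question is $\Psi(M,B)/M$; the task reduces to showing $\log(\Psi(M,B)/M) = -u\log u \cdot (1 + o(1))$, which exponentiates to the claim.

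For the upper bound I would apply Rankin's trick: for any real $\sigma \in (0,1)$,
\[ \Psi(M,B) \,\le\, \sum_{n \ge 1,\; n\text{ is }B\text{-smooth}} (M/n)^{\sigma} \,\le\, M^{\sigma} \prod_{p \le B} (1 - p^{-\sigma})^{-1} . \]
Mertens-type estimates let one approximate $\sum_{p \le B} \log(1 - p^{-\sigma})^{-1}$ in terms of $B$ and $\sigma$; setting $\sigma = 1 - \xi/\log B$ and optimising in $\xi$, the optimum falls near $\xi \approx \log(u \log u)$ and, after simplification, yields $\log(\Psi(M,B)/M) \le -u\log u \cdot (1 + o(1))$. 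The hypothesis $u \le (1-\eps)\log M/\log\log M$ is exactly what keeps this optimal $\sigma$ bounded away from~$0$ so that the Mertens approximation remains sharp.

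For the matching lower bound I would construct $B$-smooth numbers explicitly. A clean route counts subsets $S$ of primes $\le B$ satisfying $\prod_{p \in S} p \le M$, since distinct~$S$ give distinct $B$-smooth integers. Restricting to $|S| = \lfloor u \rfloor$ with primes drawn from a suitable interval just below~$B$, the prime number theorem supplies a pool of roughly $B/\log B$ candidates, and Stirling's formula applied to the resulting binomial coefficient produces a lower bound of shape $u^{-u(1+o(1))} \cdot M$. The chief difficulty is matching the two bounds at the level of $1 + o(1)$ in the exponent uniformly across the full range of~$u$: the construction must balance the subset size, the chosen prime interval, and the product constraint carefully, and it degenerates exactly when $u$ approaches $\log M/\log\log M$, because then the primes $\le B$ become too sparse to form sufficiently many subsets of size~$u$ -- which is precisely why the stated hypothesis on~$u$ cannot be relaxed.
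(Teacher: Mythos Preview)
The paper does not prove this theorem at all: it is quoted as a black-box result from~\cite{CEP83} and used only via Corollary~\ref{cor:cop} in the complexity analyses. So there is no ``paper's own proof'' to compare your proposal against.

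That said, your sketch is essentially the standard route to the Canfield--Erd\H{o}s--Pomerance estimate. Rankin's method with the saddle-point choice $\sigma = 1 - (\log(u\log u) + O(1))/\log B$ is indeed how the upper bound is obtained, and the subset-of-primes construction gives a clean lower bound at the $u^{-u(1+o(1))}$ level. One caveat: your lower-bound construction, as described, only uses squarefree integers with exactly $\lfloor u \rfloor$ prime factors from a narrow window near~$B$, and making this yield the full $1+o(1)$ in the exponent uniformly for $u$ up to $(1-\eps)\log M/\log\log M$ requires more care than your sketch suggests --- in the original paper the lower bound is obtained via a more delicate argument (and in later treatments via Hildebrand's identity or a saddle-point analysis of the generating Dirichlet series that handles both bounds simultaneously). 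Your identification of where the hypothesis on~$u$ enters is correct on both sides.
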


We recall the notation $L_N(\alpha, c) = \exp \big( (c+o(1)) (\log
N)^{\alpha} (\log \log N)^{1-\alpha} \big)$, where $\alpha \in [0,1]$,
$c > 0$, which is originally due to Pomerance~\cite{Pom83} for $\alpha
= \frac 1 2$ and Lenstra and Lenstra~\cite{LL91} for general $\alpha$.

\begin{corollary}\label{cor:cop}
  Let $M = L_N(2 \alpha, \mu)$ and $B = L_N(\alpha, \beta)$, then the
  expected number of trials until a uniformly random number in
  $\set{1, \dots, M}$ is $B$-smooth is $L_N(\alpha, \frac {\alpha \mu}
  {\beta} )$.
\end{corollary}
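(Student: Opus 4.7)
The plan is to apply Theorem~\ref{thm:cop} directly: since the probability that a uniformly random element of $\{1,\dots,M\}$ is $B$-smooth is $P = u^{-u(1+o(1))}$ with $u = \log M / \log B$, the expected number of trials is $P^{-1} = u^{u(1+o(1))}$, and the task reduces to showing that $u \log u = \frac{\alpha\mu}{\beta}(\log N)^\alpha(\log\log N)^{1-\alpha}(1+o(1))$.

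First I would write out $u$ explicitly. From $\log M = \mu(\log N)^{2\alpha}(\log\log N)^{1-2\alpha}(1+o(1))$ and $\log B = \beta(\log N)^{\alpha}(\log\log N)^{1-\alpha}(1+o(1))$, division gives
\[
u \,=\, \frac{\mu}{\beta}\,(\log N)^{\alpha}\,(\log\log N)^{-\alpha}\,(1+o(1)) \,.
\]
Taking logarithms, $\log u = \alpha\log\log N - \alpha\log\log\log N + \log(\mu/\beta) + o(1) = \alpha\log\log N\cdot(1+o(1))$, since $\log\log\log N = o(\log\log N)$. Multiplying the two expressions then yields
\[
u\log u \,=\, \frac{\alpha\mu}{\beta}\,(\log N)^{\alpha}\,(\log\log N)^{1-\alpha}\,(1+o(1)) \,,
\]
so that $u^{u(1+o(1))} = \exp(u\log u\cdot(1+o(1))) = L_N(\alpha,\frac{\alpha\mu}{\beta})$, as desired.

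Before concluding I would verify the hypothesis of Theorem~\ref{thm:cop}, namely $3\le u\le (1-\eps)\log M/\log\log M$ for large~$N$. The expression for $u$ above tends to infinity (since $\alpha>0$ is implicit in the meaningful range $0<\alpha\le\tfrac12$), so $u\ge 3$ eventually. For the upper bound, one computes $\log M/\log\log M \approx \mu(\log N)^{2\alpha}(\log\log N)^{-2\alpha}$, which dominates $u$ by a factor growing like $(\log N)^{\alpha}(\log\log N)^{-\alpha}\to\infty$, so the condition holds for any fixed $\eps>0$ and sufficiently large~$N$.

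The only real subtlety lies in bookkeeping of the various $(1+o(1))$ factors, in particular ensuring that the additive error $\log(\mu/\beta)$ in $\log u$ is absorbed into an $o(1)$ term relative to the main factor $\alpha\log\log N$; this is what forces the result to be stated only up to $L_N$-equivalence and makes the corollary clean. Everything else is routine substitution.
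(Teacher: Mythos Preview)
Your argument is correct and is exactly the standard computation one would expect; the paper in fact states the corollary without proof, treating it as an immediate consequence of Theorem~\ref{thm:cop}. Your derivation of $u$ and $u\log u$, together with the verification of the hypothesis $3\le u\le (1-\eps)\log M/\log\log M$, fills in precisely what the paper leaves implicit.
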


For analysing the basic version of the index calculus method in
$\smash{\F_p^{\times}}$, we set the smoothness bound $B = L(\frac 1 2,
\beta)$ and we have $M = N = L(1, 1)$.  As we need about $|F| \approx
B / \log B \le B$ relations, our estimated running time equals
\[ L(\tfrac 1 2, \beta) \cdot L(\tfrac 1 2, \tfrac 1 {2 \beta}) =
L(\tfrac 1 2, \beta + \tfrac 1 {2 \beta}) , \] and the optimal choice
\smash{$\beta = \frac 1 {\sqrt 2}$} results in a running time of
$L(\frac 1 2, \sqrt 2)$ for the relation generation.  The linear
algebra running time (using iterative techniques for sparse matrices)
is about $B^2 = L(\frac 1 2, 2 \beta) = L(\frac 1 2, \sqrt 2)$ as
well, while the individual logarithm phase is of lower complexity.

Similarly, a polynomial is called \emph{$b$-smooth} if all its
irreducible factors are of degree~$\le b$; hence, the $1$-smooth
polynomials are precisely those that split into linear factors.

\begin{theorem}[Odlyzko~\cite{Odl85}, Lovorn~\cite{Lov92}]\label{thm:ol}
  A uniformly random polynomial $f \in \F_q[X]$ of degree~$m$ is
  $b$-smooth with probability $P = u^{-u (1 + o(1))}$, where $u =
  \frac m b$, provided that $m^{1/100} \le b \le m^{99/100}$.
\end{theorem}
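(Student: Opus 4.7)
The plan is to mirror the proof of the Canfield-Erd\H{o}s-Pomerance theorem (Theorem~\ref{thm:cop}), exploiting the tight analogy between $\F_q[X]$ and~$\Z$. Since the leading coefficient in $\F_q^\times$ does not affect $b$-smoothness, I would first reduce to counting monic polynomials: letting $\Psi_q(m,b)$ denote the number of monic $b$-smooth polynomials of degree $m$, we have $P = \Psi_q(m,b)/q^m$. The key inputs are the prime polynomial theorem $I_d(q) = q^d/d + O(q^{d/2}/d)$ for the number of monic irreducibles of degree $d$ in $\F_q[X]$, together with the generating function identity
\[ \sum_{m\ge 0}\Psi_q(m,b)\,T^m \;=\; \prod_{d\le b}(1-T^d)^{-I_d(q)} \,. \]

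For the upper bound I would apply Rankin's trick: for any $\sigma\in(0,1)$,
\[ \Psi_q(m,b) \;\le\; q^{\sigma m}\prod_{d\le b}(1-q^{-\sigma d})^{-I_d(q)} \,. \]
Taking logarithms, substituting $I_d(q) \approx q^d/d$, and expanding $-\log(1 - q^{-\sigma d})$ as a power series reduces the right hand side to a sum dominated by $\sum_{d\le b} q^{d(1-\sigma)}/d$. Optimising $\sigma$ by a saddle-point argument -- the optimal choice being roughly $\sigma \approx 1 - (\log u)/(b\log q)$ -- then yields $\Psi_q(m,b)/q^m \le u^{-u(1+o(1))}$. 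For the matching lower bound, I would either use a direct combinatorial construction, counting products of distinct irreducibles whose degrees sum to~$m$ and estimating the dominant contribution by the same saddle-point, or invert the generating function via a contour-integral analysis around the pole of the zeta function $Z_q(T) = 1/(1-qT)$ at $T = 1/q$.

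The main technical obstacle will be making the saddle-point estimates uniform in the wide regime $m^{1/100} \le b \le m^{99/100}$, i.e.\ over all $u = m/b$ that may grow with $m$ at various rates. This requires careful bookkeeping of the error terms -- both the error in replacing $I_d(q)$ by $q^d/d$ at small $d$, and the error in the saddle-point approximation of the contour integral -- and constitutes the bulk of the technical work in the cited sources of Odlyzko~\cite{Odl85} and Lovorn~\cite{Lov92}. The remainder of the argument is formal and tracks the integer analogue step by step.
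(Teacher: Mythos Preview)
The paper does not prove Theorem~\ref{thm:ol}; it is stated as a quoted result from the literature, attributed to Odlyzko~\cite{Odl85} and Lovorn~\cite{Lov92}, and used as a black box in the complexity analyses that follow. There is therefore no ``paper's own proof'' to compare against.

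That said, your outline is a faithful sketch of the standard argument as it appears in those references: reduce to monic polynomials, exploit the Euler-product generating function $\sum_m \Psi_q(m,b) T^m = \prod_{d\le b}(1-T^d)^{-I_d(q)}$ together with the exact or asymptotic count of irreducibles, apply Rankin's method with a saddle-point choice of~$\sigma$ for the upper bound, and obtain the lower bound either combinatorially or by contour integration. The observation that uniformity of the error terms over the full range $m^{1/100}\le b\le m^{99/100}$ is the main technical point is correct and is precisely what Lovorn's thesis~\cite{Lov92} supplies over Odlyzko's earlier, more restricted treatment. Your proposal is thus an accurate plan for reconstructing the cited proof, not a comparison with anything in the present expository paper.
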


For the DLP in $G = \F_{p^n}^{\times}$, where~$p$ is fixed, we obtain
quite analogously a running time of $L(\frac 1 2, \sqrt 2)$.


\section{The number field sieve}\label{sec:nfs}

The number field sieve is an advanced index calculus method with
heuristic $L(1/3)$-complexity.  It was originally devised for the
integer factorisation problem, but the method can be adapted to apply
for the DLP in prime fields~\cite{Gor93,Sch93} and more generally
fields of large or medium characteristic~\cite{Sch00,JLSV06}.  The
principle difference between these algorithms and the basic version of
the previous section is that the two sides of each relation are of a
considerably smaller ``size'' than before, so that the smoothness
probability is -- at least heuristically -- greatly increased.

The setup for computing discrete logarithms in $\F_p^{\times}$ for a
prime~$p$ is as follows.  Let $f_1, f_2 \in \Z[X]$ be coprime
irreducible polynomials and $m \in \Z$ such that $f_1(m) \equiv f_2(m)
\equiv 0 \bmod p$.  To simplify the description we assume that~$f_1$
and~$f_2$ are monic, although this requirement is not essential.  We
have the following commutative diagram: \vspace{-2mm}
\begin{center}
  ~\xymatrix@R=8mm@C=2mm { & \Z[X] \ar[dl] \ar[dr] & \\
    \Z[X] / (f_1) \ar[dr]_{\ol{\ev}_m} & & 
    \Z[X] / (f_2) \ar[dl]^{\ol{\ev}_m} \\
    & \F_p & }~
\end{center}
Here, $\ol{\ev}_m \colon \Z[X] / (f_i) \to \F_p$ denotes the map $[ h ]
\mapsto [ h(m) ]$, for $i = 1, 2$.  For applying the index calculus
method we need a way to factor elements $[ h ]$ in the ring $\Z[X] /
(f_i)$ over a smoothness base, which is a more intricate issue,
requiring some concepts from algebraic number theory (see, e.g.,
\cite{Neu99}).

For $i = 1, 2$ let~$d_i$ be the degree and let~$x_i$ be a root of the
polynomial~$f_i$.  Then $K_i = \Q( x_i )$ is a number field, i.e., a
finite extension of~$\Q$, of degree~$d_i$, and its associated ring of
integers~$\mcO_i$ is a Dedekind domain, thus every nonzero ideal
factors uniquely into a product of prime ideals.  We have $\Z[X] /
(f_i) \cong \Z[ x_i ] \subseteq \mcO_i$ and the quotient group $\mcO_i
/ \Z[ x_i ]$ is finite.

In order to generate relations we make use of the multiplicative norm
map $N \colon K_i \to \Q$, which satisfies $N( \mcO_i ) \subseteq \Z$.
Associating to each nonzero ideal $\fa \subseteq \mcO_i$ its norm $N (
\fa ) = \# \mcO_i / \fa$, we have $| N(\alpha) | = N( (\alpha) )$ for
any nonzero element $\alpha \in \mcO_i$, and if $( \alpha ) = \prod
\fp^{e_{\fp}}$ is the decomposition of the principal ideal~$(\alpha)$
into prime ideals, then $|N( \alpha )| = \prod N(\fp)^{e_{\fp}}$.
Therefore, the prime ideal decomposition of $( \alpha )$ can be easily
obtained from the prime factorisation of the norm $N( \alpha ) \in
\Z$, provided that every prime ideal~$\fp$ containing $\alpha$ has
prime norm; this holds for example in the situation $\alpha = h(x_i)$
and $h = h_1 X + h_0$ with coprime integers $h_0, h_1 \in \Z$.

Accordingly, as factor base we choose elements, which are represented
by prime ideals in~$\mcO_i$ of small norm, and we look for linear
polynomials $h \in \Z[X]$ such that both norms $N(h(x_i)) = h_1^{d_i}
f_i( - \frac{h_0} {h_1} )$, $i = 1, 2$, are smooth.  After collecting
sufficiently many relations we obtain the ``virtual logarithms'' of
the prime ideals using linear algebra, cf.~\cite{JL03,Sch05}.  We
remark that one has to account for the unit groups of~$\mcO_1$
and~$\mcO_2$, for which one usually includes up to $(d_1 + d_2)$
further factor base elements corresponding to certain computable
logarithmic maps devised by Schirokauer~\cite{Sch93}.

\paragraph{Parameter choices and complexity analysis.}

Classically one chooses $d_2 = 1$, so that there is a rational side
$K_2 = \Q$ and $\mcO_2 = \Z$, as well as $x_2 = m \in \Z$.  Denote $f
= f_1$, $d = d_1$ and $x = x_1$.  Letting $h = h_1 X + h_0 \in \Z[X]$,
where $|h_i| \le E$, be the sieving polynomial one wants both $N( h(x)
)$ and $h(m)$ to be $B$-smooth.  To obtain an $L(1/3)$ algorithm one
sets $B = L(\smash{\frac 1 3}, \beta)$ and $E = L(\smash{\frac 1 3},
\eps)$, as well as $d = (\delta + o(1)) (\frac{\log N}{\log \log
  N})^{1/3}$ with parameters $\beta, \delta, \eps > 0$ to be
determined.

From the Kalkbrener bound~\cite{Kal97} one gets $\log |N( h(x))|
\approx \log \| f \|_{\infty} + d \log E$, where $\| f \|_{\infty}$
denotes the maximum absolute value of the coefficients of~$f$, while
$\log |h(m)| \approx \log m$.  One can choose $\| f \|_{\infty} \le m$
and $m \approx N^{1/d}$, so that $\log m \approx \frac 1 d \log N$,
and this implies $m = L(\frac 2 3, \frac 1 \delta)$.  Taking into
account $d \log E = \log L(\frac 2 3, \delta \eps)$ one thus obtains
\[ |N( h(x) ) h(m)| = L(\tfrac 2 3, \delta \eps 
+ \tfrac 2 \delta) . \] 
Under the heuristic assumption that the quantity $|N( h(x) )
h(m)|$ is uniformly distributed for a random polynomial~$h$, we get
from Corollary~\ref{cor:cop} for the probability~$P$ of this quantity
being $B$-smooth that $\frac 1 P = L(\frac 1 3, \smash{\frac{ \delta^2
    \eps + 2 } { 3 \beta \delta }} )$.

The sieving space size~$\approx E^2$ should be equal to the linear
algebra complexity~$\approx B^2$, therefore $\beta = \eps$, and as
about~$B$ relations are needed one sets $\frac 1 P \approx B$.
Therefore, we obtain the condition \smash{$\frac{ \delta^2 \beta + 2 }
  { 3 \beta \delta } = \beta$}, or $\delta^2 \beta + 2 = 3 \beta^2
\delta$.  The optimal choice $\delta^2 = 2 / \beta$ then yields
\smash{$\beta = ( \frac 8 9 )^{1/3}$}, resulting in a complexity of
\smash{$L(\frac 1 3, ( \frac{64} 9 )^{1/3} \approx 1.923 )$} for the
first two phases of the index calculus method.

For prime fields in which the prime is of particular shape the
so-called special number field sieve is available~\cite{Sem02}.  In
this case one may have small coefficients of~$f$, namely $\log \| f
\|_{\infty} = o(\log m)$, which leads to a faster algorithm.  Indeed,
one gets $\delta^2 \beta + 1 = 3 \beta^2 \delta$, and for $\delta^2 =
1 / \beta$ one obtains $\beta = ( \frac 4 9 )^{1/3}$, resulting in a
complexity of $L(\frac 1 3, ( \frac{32} 9 )^{1/3} \approx 1.526 )$.

Note that these two complexities are the same as for factoring using
the number field sieve and the special number field sieve,
respectively.

\paragraph{Individual logarithms.}

For obtaining an individual logarithm $\log_g h$ the previous
approach, i.e., multiplying~$h$ by random powers of~$g$ until it
factors over the factor base, is not viable as the factor base is now
much smaller.  Instead one uses a recursive strategy commonly referred
to as the \emph{descent}.  At each step of the descent one
\emph{eliminates} a given element which is not in the factor base,
i.e., rewrites it as a product of elements of smaller norm.  Starting
with~$h$, eliminations are carried out recursively until only factor
base elements remain.  In this way a tree is constructed with~$h$ as
root and with factor base elements as leaves, from which the logarithm
$\log_g h$ can easily be deduced from the factor base logarithms by
traversing the tree.


We omit the technical details for this section and simply remark that
this phase can be shown to have negligible (heuristic) complexity
compared to the other steps (see, e.g., \cite{CS06}).

\subsection{The medium number field sieve}

This variation of the number field sieve applies to the DLP in finite
fields of large or medium characteristic~\cite{JLSV06}.  

The setup is as follows.  Choose irreducible polynomials $f_1, f_2 \in
\Z[X]$ such that $f_1 \bmod p$ and $f_2 \bmod p$ have a common
irreducible factor $I \in \F_p[X]$ of degree~$n$.  (A simple choice is
to let $f_2 = f_1 + p$, if $f_1 \bmod p$ contains an irreducible
degree~$n$ factor; more advanced selection methods are sketched
below.)  We have thus the following commutative diagram:
\begin{center}
  ~\xymatrix@R=8mm@C=2mm { & \Z[X] \ar[dl] \ar[dr] & \\
    \Z[X] / ( {f_1} ) \ar[dr] & & \ar[dl] \Z[X] / ( {f_2} ) \\
    & \hspace{-6mm} \F_{p^n} \cong \F_p[X] / ( I )  \hspace{-6mm} & }~
\end{center}

As in the number field sieve for prime fields one obtains relations by
finding polynomials $h \in \Z[X]$ (of some degree $\le t$ and $\| h
\|_{\infty} \le E$) such that both norms $N(h(x_i)) = \Res(h, f_i)$
are $B$-smooth, where~$x_i$ is a root of~$f_i$ and $\Res$ denotes the
resultant.  The Kalkbrener bound~\cite{Kal97} implies here that
\[ \log | N(h(x_1)) N(h(x_2)) | \,\approx\, t (\log \| f_1
\|_{\infty} + \log \| f_2 \|_{\infty} ) + (d_1 + d_2) \log E , \]
where $d_i$ is the degree of~$f_i$ (if~$d_i$ is small enough).
Therefore, the running time
will crucially depend on the degree and the coefficient size of the
selected polynomials~$f_i$ in the setup phase of the algorithm.  The
recent work~\cite{BGGM15} achieves improvements by a clever polynomial
selection.

Indeed, for large characteristic one first chooses a polynomial $f_1
\in \Z[X]$ of degree $d+1$ with $\| f_1 \|_{\infty}$ small such that
$f_1 \bmod p$ has an irreducible factor~$I$ of degree~$n$.  Then a
polynomial $f_2 \in \Z[X]$ of degree~$d$ is chosen such that $I \mid
f_2 \bmod p$ and $\| f_2 \|_{\infty}$ as small as possible; this can
be achieved by lattice reduction~\cite{LLL82}, resulting in the
estimation $\log \| f_2 \|_{\infty} \approx \frac n {d+1} \log p$,
while one has $d_1 = d + 1$ and $d_2 = d \ge n$.  With suitably chosen
parameters this results in a running time of $L(\frac 1 3, (\frac {64}
9)^{1/3})$, the same as for the prime field case.

For medium characteristic the so-called Conjugation Method improves
upon the original selection method.  Here, one lets $\mu = Y^2 + a Y
+ b \in \Z[Y]$ be an irreducible polynomial with small coefficients
such that $\mu \bmod p$ has a root $\lambda \in \F_p$.  Then one
chooses $g_0, g_1 \in \Z[X]$ with $\| g_i \|_{\infty}$ small and $\deg
g_0 < n = \deg g_1$.  With this let $I = \lambda g_0 + g_1$, $f_2
= u g_0 + v g_1$, where $\lambda = \frac u v \bmod p$ with $\log \|
f_2 \|_{\infty} \approx \frac 1 2 \log p$, as well as $f_1 =
\Res_Y(\mu, Y g_0 - g_1) = g_1^2 + a g_1 g_0 + b g_0^2$, so that $\|
f_1 \|_{\infty}$ is small, while $d_1 = 2 n$ and $d_2 = n$.  The
complexity analysis results in a running time of $L(\frac 1 3, (\frac
{96} 9)^{1/3} \approx 2.201)$.

\paragraph{The tower number field sieve.}

An alternative method to deal with the DLP in extension fields stems
from work of Schirokauer~\cite{Sch00}, in which an algorithm for
finite fields~$\F_{p^n}$ of fixed extension degree~$n$ is presented
with a heuristic running time of $L(\frac 1 3, ( \frac{ 64 } 9 )^{1/3}
)$.  It uses a field extension on top of a number field~$F$, for which
its ring of integers~$\mcO_F$ satisfies $\mcO_F / (p) \cong \F_{p^n}$.
The setup is then very similar to the number field sieve for prime
fields, with~$\Z$ replaced by the ring~$\mcO_F$.

The algorithm has been analysed and extended in a recent
work~\cite{BGK15}, in which the authors coined the term tower number
field sieve.  In particular, it has been shown that it applies to the
whole range of large characteristic fields, achieving the same
complexity $L(\frac 1 3, ( \frac{64} 9)^{1/3} )$ as the medium number
field sieve of~\cite{JLSV06}, while being practically advantageous for
certain cases.



\section{The function field sieve -- classical variant}%
\label{sec:ffs}

The first (heuristic) $L(1/3)$ algorithm for the DLP in finite fields
was devised by Coppersmith in 1984~\cite{Cop84} -- thus predating the
number field sieve -- and as originally described applies to binary
finite fields, i.e., finite fields of characteristic~$2$.  As a result
of this algorithm, binary field DLPs fell out of favour in
cryptography; only much later with the advent of pairing-based
cryptography did they become fashionable once again, when the lower
security-per-bit they offer was considered tolerable thanks to the
functionality afforded by pairings on supersingular
curves~\cite{Gal01}.

The function field sieve was conceived of by Adleman~\cite{Adl94} in
analogy with the number field sieve, and later refined by Adleman and
Huang~\cite{AH99}, but can also be seen as a generalisation of
Coppersmith's algorithm. Joux and Lercier later provided a more
streamlined version~\cite{JL02}, as well as an improved version for
medium-size base fields~\cite{JL06}. In this section we briefly
describe these algorithms in turn, as not only are they historically
relevant to the DLP in small characteristic fields, but they also
serve as the benchmark against which the recent developments may be
compared.

Before describing the algorithms we remark that it is possible to map
between any two representations of a finite extension field
efficiently~\cite{Len91}; therefore when solving a DLP one is free to
choose a field-defining polynomial to one's advantage.


\subsection{Coppersmith's algorithm}

Coppersmith's algorithm~\cite{Cop84} is an index calculus algorithm
for solving the DLP in $\F_{2^n}$, represented as $\F_2[X] / ( I )$,
where $I \in \F_2[X]$ is an irreducible degree~$n$ polynomial of the
form $I = X^n + J$, where $J \in \F_2[X]$ is of relatively low degree
(less than $n^{2/3}$).  Let the factor base consist of the irreducible
polynomials up to a degree bound~$\le b$, and choose positive
integers~$h$ and~$\ell$ such that $h 2^{\ell} \ge n$.  For relation
generation, consider $f = u X^h + v \in \F_2[X]$, with $u, v \in
\F_2[X]$ coprime polynomials of degree~$\le d$, where~$d$ is a sieving
parameter, and compute
\[ f^{2^{\ell}} = ( u X^h + v )^{2^{\ell}} = u^{2^{\ell}} X^{h
  2^{\ell}} + v^{2^{\ell}} \equiv u^{2^{\ell}} X^{h 2^{\ell} - n} J +
v^{2^{\ell}} = g \pmod I . \] 

A relation is found if the polynomials $f, g \in \F_2[X]$ on both
sides are $b$-smooth.  Note that the corresponding degrees, namely
$\deg f \le h + d$ and $\deg g \le r + 2^{\ell} d$ with $r = \deg
X^{h 2^{\ell} - n} J$, can be made rather small by suitably chosen
parameters.  Indeed, we let $d = (c + o(1)) n^{1/3} (\log n)^{2/3}$
and suppose that $2^{\ell} \approx \sqrt{ \frac n d }$, as well as $h
= \lceil \frac n {2^{\ell}} \rceil$.  Then $\deg f$ and $\deg g$ are
about $\sqrt{ n d } = (\sqrt c + o(1)) n^{2/3} (\log n)^{1/3}$.
Choosing $b = (c + o(1)) n^{1/3} (\log n)^{2/3}$, by applying an
analogue of Corollary~\ref{cor:cop}, we get for the probability~$P$ of
both polynomials being $b$-smooth that $\frac 1 P = L( \frac 1 3,
\smash{\frac 2 {3 \sqrt c}} )$.  In order to generate enough relations
we set $\smash{\frac 2 {3 \sqrt c}} = c$, so that $c = ( \frac 4 9 )^{1/3}$,
resulting in an overall complexity of $L( \frac 1 3, ( \frac {32}
9)^{1/3} )$ for relation generation.  This matches the linear algebra
complexity using sparse matrix techniques, while the individual
logarithm phase was shown to have lower complexity.

This analysis supposes that $\sqrt{ \frac n d }$ is close to some
power $2^{\ell}$, which cannot be fulfilled for all~$n$.  In general
one obtains a complexity between $L( \frac 1 3, ( \frac {32} 9)^{1/3}
)$ and $L( \frac 1 3, 4^{1/3} )$, with $(\frac {32} 9)^{1/3} \approx
1.526$ and $4^{1/3} \approx 1.587$.  Observe that the algorithm
exploits the basic identity $(u + v)^2 = u^2 + v^2$, which holds for
any polynomials $u, v \in \F_2[X]$, and can thus be adapted easily to
the case of fields of fixed characteristic~$p$, by using the identity
$(u + v)^p = u^p + v^p$ for $u, v \in \F_p[X]$, with a correspondingly
increased upper bound on the resulting complexity.


\subsection{The function field sieve}

The function field sieve is an adaption of the number field sieve and
applies to finite fields~$\F_{p^n}$ of small characteristic~$p$.  The
principle idea is to replace~$\Z[X]$ by $\F_p[X, Y]$, and one usually
chooses the polynomial~$f_2$ to be of degree~$1$.  More precisely,
define~$\F_{p^n}$ as $\F_p[X] / ( I )$, where the irreducible
degree~$n$ polynomial $I \in \F_p[X]$ divides $f_d m^d + f_{d-1}
m^{d-1} + \dots + f_1 m + f_0 = f(m)$, with an irreducible polynomial
$f(Y) = \sum f_i Y^i \in (\F_p[X])[Y]$ and $m \in \F_p[X]$ suitably
chosen.  Then $f \in \F_p[X,Y]$ defines an algebraic curve~$C$ and its
associated function field $\F_p(C) = \on{Quot} \big( \F_q[X,Y] / (f)
\big)$.  Since $f(m) \equiv 0 \bmod I$ we have the following
commutative diagram:
\begin{center}
  ~\xymatrix@R=8mm@C=2mm { & \F_p[X, Y] \ar[dl] \ar[dr]^{Y \mapsto m} & \\
    \hspace{-4mm} \F_p[X, Y] / (f) \hspace{-4mm} \ar[dr]_{\phi} &  
    & \F_p[X] \ar[dl] \\
    & \hspace{-3mm} \F_{p^n} \cong \F_p[X] / ( I ) \hspace{-3mm} & }~
\end{center}

The factor base consists of all irreducible polynomials in $\F_p[X]$
up to degree~$b$.  In order to generate relations we search for
polynomials $h = h_1 Y + h_0 \in \F_p[X,Y]$ such that both $h_1 m +
h_0$ and the norm of~$h$ from $\F_p(C)$ to~$\F_p(X)$ are $b$-smooth.
In this case, we can express the divisor of $h_1 Y + h_0$ as a sum of
places in $\F_p(C)$ over factor base elements, and relate the
corresponding so-called surrogates via the map~$\phi$ with the factors
of $h_1 m + h_0$.  Then we solve as usual the logarithms of the factor
base elements by linear algebra.

As already mentioned there are three variations of the function field
sieve, which differ by the choice of the polynomials~$I$, $f$ and~$m$.
In Adleman's original version~\cite{Adl94} the field-defining
polynomial is written as $I = m^d + f_{d-1} m^{d-1} + \dots + f_1 m +
f_0$ using the base-$m$ technique prevalent in the number field sieve.
The refined version by Adleman and Huang~\cite{AH99} improves upon
this approach by choosing $I = X^n + J$ with $J \in \F_p[X]$ of small
degree $\deg J < n^{2/3}$, $f = Y^d + X^{ed-n} J$ and $m = X^e$,
which results in a complexity of $L( \frac 1 3, (\frac {32}
9)^{1/3})$, the same as for the special number field sieve.  It is
worth noting that Coppersmith's algorithm can be seen as a particular
case of this variation.  Finally, the function field sieve version of
Joux and Lercier~\cite{JL02} achieves the same complexity, but has
some practical advantages.  The idea is to start by choosing~$f$ with
coefficients of low degree in~$X$ and then letting $I = m_2^d f(
\frac{m_1} {m_2} )$ be the field-defining polynomial, where $m =
\frac{m_1} {m_2}$ is now a rational function.  In all cases the
individual logarithm phase can be shown to have a lower complexity
than the other two phases.


\subsection{The medium function field sieve}\label{sec:JL06}

Joux and Lercier in 2006 proposed the following simplified variant of
the function field sieve, which employs just the rational function
field of a univariate polynomial ring~\cite{JL06}.  The algorithm
applies to the whole range of finite fields $\F_{p^n}$ of medium-small
characteristic, i.e., $p = L_{p^n}(\alpha)$, where $\alpha \le \frac 1
3$. It also applies to extension fields $\F_{q^m}$, where~$q$ is any
prime power.

The representation of the field $\F_{q^m}$ is as follows.  Let $f, g
\in \F_q[X]$ be polynomials such that $g(f(X)) - X$ has an irreducible
factor $I \in \F_q[X]$ of degree~$m$.  Let~$x$ be a root of~$I$
in~$\F_{q^m}$ and let $y = f(x)$, hence $x = g(y)$.  Again we have the
following commutative diagram:
\begin{center}
  ~\xymatrix@R=8mm@C=4mm { & \!\!\F_q[X,Y]\!\! \ar[dl]_{Y \mapsto f(X)}
    \ar[dr]^{X \mapsto g(Y)} & \\
    \F_q[X] \ar[dr]_{\ev_x} & &
    \F_q[Y] \ar[dl]^{\ev_y} \\
    & \F_{q^m} & }~
\end{center}

Now if $q = L_{q^m}(1/3)$ then for $a, b, c \in \F_q$ we consider $h
= X Y + a Y + b X + c \in \F_q[X, Y]$, which leads in~$\F_{q^m}$ to
the following identity
\[ x f(x) + a f(x) + b x + c = g(y) y + a y + b g(y) + c . \] 
Let the factor base consist of the linear polynomials in~$X$ or in~$Y$.
If the corresponding polynomials on both sides, namely $h(X, f(X)) = X
f(X) + a f(X) + b X + c$ and $h(g(Y), Y) = g(Y) Y + a Y + b g(Y) + c$
are $1$-smooth, then a relation has been found.  We may choose the
polynomials~$f$ and~$g$ such that $\deg f, \deg g \approx \sqrt m$,
which leads to an algorithm with complexity $L( \frac 1 3, 3^{1/3}
\approx 1.442)$.  Here, the individual logarithm phase turns out to
have the same complexity as the main phase, with the bottleneck being
the elimination of degree two polynomials, see~\cite[Sec.~3]{JL06}.

In the general case, where $q = L_{q^m}(\alpha)$ with $\alpha \le 1 /
3$, to obtain an $L(1/3)$ algorithm we set as the degree bound for the
factor base $(C + o(1)) (\frac {\log q} {\log \log q})^{1/3 - \alpha}$
and consider polynomials of the form $h = h_1(X) Y + h_0(X)$ in order
to generate relations.  Note that for $\alpha = 0$ the case $\log q =
o(\log \log q^m)$ has to be treated extra with a slightly modified
analysis.  If $q = L_{q^m}(0)$, i.e., the case of small
characteristic, this results in an algorithm of complexity $L( \frac 1
3, (\frac {32} 9)^{1/3})$, with a less costly individual logarithm
phase.


\section{Small characteristic quasi-polynomial time algorithms}\label{sec:new}

In this section we present the two recent quasi-polynomial time algorithms for the small characteristic DLP.  The first of 
these was due to Barbulescu, Gaudry, Joux and Thom\'e~\cite{BGJT14}, while the second was due to Granger, Kleinjung and 
Zumbr\"agel~\cite{GKZ14b,GKZ15}.  For the purpose of assisting the reader in comparing the key insights and ideas behind the 
two algorithms, we present these for relation generation and individual logarithms, in turn.

\subsection{Resisting smoothness heuristics}

Before presenting the aforementioned developments, we start with some general remarks on how to obviate smoothness heuristics.

As detailed in Section~\ref{sec:icm}, a complexity of $L(1/2)$ could be said to be the ``natural complexity'' of the DLP when elements 
are generated uniformly in the multiplicative group. In order to obtain algorithms of complexity better than $L(1/2)$ -- at least for 
the relation generation phase of the index calculus method -- there are (at least) two approaches that one can explore. 
Firstly, one can try to generate relations between elements of smaller norm, which heuristically would have a higher probability 
of being sufficiently smooth, which is what the $L(1/3)$ algorithms do. Secondly, one can try to generate relations which have better than 
expected smoothness properties (or possibly a combination of both of these approaches). The second idea is perhaps far less obvious and 
more nuanced than the first; indeed, until recently it does not seem to have been appreciated that it was even a possibility, most likely 
because from an algorithm analysis perspective it is desirable that the expected smoothness properties hold. 


However, in 2013 a series of algorithmic breakthroughs occurred which demonstrated that for small characteristic 
fields the DLP is, at least heuristically, far easier than originally believed.
Central to these developments was the fundamental insight that one can produce families of polynomials 
that are smooth by construction~\cite{GGMZ13,Jou14} and use these to generate relations, 
while uniformly generated polynomials of the same degree are smooth with only an exponentially 
small probability. These results were the first to succeed in applying the second approach described above, and the 
families of polynomials used lay the foundation for the two quasi-polynomial algorithms.

\subsection{Polynomial time relation generation}

We now explain the polynomial time relation generation methods due to G\"olo\u{g}lu, Granger, McGuire and Zumbr\"agel~\cite{GGMZ13} and
Joux~\cite{Jou14}, which while different are essentially isomorphic, and were discovered independently at essentially the same time.

The finite fields to which the methods apply are of the form $\F_Q = \F_{q^{kn}}$, with $k \ge 2$ and $n \approx q$.  
A given small characteristic finite field $\F_{p^n}$ can be embedded into one of this form by letting $q = p^{\ell}$ 
with $\ell = \lceil \log_p{n} \rceil$, thus increasing the extension degree by a factor of $k \lceil \log_p{n} \rceil$, 
which does not impact upon the resulting complexity too much, see Section~\ref{ssec:qpas}.

\paragraph{The GGMZ method.}

The field setup used in~\cite{GGMZ13} can be seen in the context of the Joux-Lercier function
field sieve~\cite{JL06}, described in Section~\ref{sec:JL06}, but for which the degrees of the
polynomials~$f$ and~$g$ are extremely unbalanced.  In fact, we
consider $f = X^q$ and $g = \smash{\frac{h_0}{h_1}}$ for some $h_0,
h_1 \in \F_{q^k}[X]$ of low degree\footnote{In~\cite{GGMZ13} $h_1$ was not specified (and was thus implicitly $1$);
in~\cite{GKZ14a} $h_1$ was introduced in order to exploit other more convenient field representations.}, which leads to the following
field representation. We define $\F_{q^{kn}}$ as
$\F_{q^k}[X] / ( I )$, where $I \in \F_{q^k}[X]$ is an
irreducible degree~$n$ polynomial dividing $h_1(X^q) X - h_0(X^q)$ for
$h_0, h_1 \in \F_{q^k}[X]$ of low degree $\le d_h$.  Letting~$x$ be a root
of~$I$ in $\F_Q$, we have $y = f(x) = x^q$ and $x = g(y) =
\smash{\frac{h_0(y)}{h_1(y)}}$. The factor base consists of $h_1(x^q)$ and all linear polynomials on the $x$-side; note that the $y$-side 
factor base is not needed since for all $d \in \F_{q^k}$ one has $(y + d) = (x + d^{1/q})^q$. 

As in Section~\ref{sec:JL06}, let $a,b,c$ be in the base field $\F_{q^k}$, and consider elements of $\F_{q^{kn}}$ of the form 
$x y + a y + b x + c$. Using the field isomorphisms we have the following identity for such elements:
\begin{equation}\label{basicidentity}
  x^{q+1} + a x^q + b x + c = \tfrac 1 {h_1(y)} \big( y h_0(y) + a y
  h_1(y) + b h_0(y) + c h_1(y) \big) \,. 
\end{equation}
An extremely useful observation is that the l.h.s.\ of Eq.~(\ref{basicidentity}) splits completely over $\F_{q^k}$ with probability 
$\approx 1/q^3$, which is exponentially higher than the splitting probability of a uniformly random polynomial of the same degree, which 
is $\approx 1/(q+1)!$. Indeed, let $k \ge 3$ and consider the polynomial $X^{q+1} + aX^{q} + bX + c$. If $ ab\ne c$ and $a^{q}\ne b$, this 
may be transformed (up to a scalar) into
\[ F_B(\overline{X}) = \overline{X}^{q+1} + B \overline{X} + B \:, \quad \text{with} \quad B = \frac{(b - a^{q})^{q+1}} {(c - ab)^{q}} \:, \] 
via $\smash{X = \frac{c - ab}{b - a^q} \, \overline{X} - a}$. Clearly, the original polynomial splits whenever~$F_B$ splits and we have a
valid transformation from $\overline{X}$ to $X$. The following theorem of Bluher counts the number of $B \in \F_{q^k}$ for which~$\F_B$ 
splits completely over~$\F_{q^k}$.$\!\!$
\begin{theorem}\label{thm:bluher} \emph{(Bluher~\cite{Blu04})}
  The number of elements $B \in \F_{q^k}^{\times}$ s.t. the polynomial $\smash{F_B(\overline{X}) \in \F_{q^k}[\overline{X}]}$ 
  splits completely over $\F_{q^k}$ equals
  \[ \frac{q^{k-1}-1}{q^{2}-1} \quad \text{if } k \text{ is odd} \:, \qquad
  \frac{q^{k-1} - q}{q^{2}-1} \quad \text{if } k \text{ is even} \:. \]
\end{theorem}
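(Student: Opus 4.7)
The plan is to encode roots of $F_B$ via a rational parametrisation, reveal an $\F_q$-affine structure that restricts fibre sizes to $\{0,1,2,q+1\}$, and then extract $N_{q+1}$ via a short linear system. Since $F_B(-1) = 1 \ne 0$, every $\F_{q^k}$-rational root $\alpha$ of $F_B$ lies in $\F_{q^k}\setminus\{-1\}$ and uniquely satisfies $B = \psi(\alpha) := -\alpha^{q+1}/(\alpha+1)$; conversely each such $\alpha$ is a root of $F_{\psi(\alpha)}$. Thus the number of $\F_{q^k}$-rational roots of $F_B$ equals $|\psi^{-1}(B)|$ for the map $\psi \colon \F_{q^k}\setminus\{-1\} \to \F_{q^k}$.

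To control fibre sizes I fix any root $\alpha_0$ of $F_B$ (noting $\alpha_0 \ne 0$ since $B \ne 0$) and substitute $X = \alpha_0 + Y$; using $B = \psi(\alpha_0)$ the polynomial factors as
\[ F_B(\alpha_0 + Y) = Y\bigl(Y^q + \alpha_0 Y^{q-1} + c\bigr), \quad c = \alpha_0^q/(\alpha_0+1) \ne 0. \]
The further substitution $W = 1/Y$ converts the second factor into the $\F_q$-affine equation $W^q + B''W = -1/c$ with $B'' = (\alpha_0+1)/\alpha_0^{q-1}$. Because $W \mapsto W^q + B''W$ is $\F_q$-linear with kernel of $\F_q$-dimension $0$ or $1$, this affine equation admits $0$, $1$, or $q$ solutions in $\F_{q^k}$, so $|\psi^{-1}(B)| \in \{0,1,2,q+1\}$. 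Letting $N_i$ count $B \in \F_{q^k}^\times$ with $|\psi^{-1}(B)| = i$, cardinality and fibre-sum identities yield
\[ N_0 + N_1 + N_2 + N_{q+1} = q^k - 1, \qquad N_1 + 2 N_2 + (q+1) N_{q+1} = q^k - 2. \]

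A third equation comes from counting those $\alpha_0$ for which the kernel is nontrivial, equivalently $-B''(\alpha_0)$ is a $(q-1)$-th power in $\F_{q^k}^\times$. Since $\alpha_0^{-(q-1)}$ is automatically a $(q-1)$-th power, this reduces to $-(\alpha_0+1) \in H$, where $H \le \F_{q^k}^\times$ denotes the subgroup of $(q-1)$-th powers, of index $q-1$ and size $(q^k-1)/(q-1)$. Enumerating $\alpha_0 \in \F_{q^k}\setminus\{-1,0\}$ with this property gives $N_1 + (q+1)N_{q+1} = (q^k-1)/(q-1) - \delta$, where $\delta \in \{0,1\}$ records whether $-1 \in H$. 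The parity dichotomy in the statement arises here: for $q$ odd, $-1 \in H$ iff $(q^k-1)/(q-1) = 1 + q + \dots + q^{k-1}$ is even, i.e.\ iff $k$ is even; for $q$ even the condition is automatic.

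The main obstacle is the fourth equation separating $N_1$ from $N_{q+1}$: for each $\alpha_0$ with $-(\alpha_0+1) \in H$ one must decide whether $-1/c$ lies in the image — not merely the codomain — of the singular $\F_q$-linear map $W \mapsto W^q + B''W$. I would attack this either by a character-sum duality pairing $-1/c$ against a generator of the kernel (exploiting that the image is the annihilator of the kernel under a suitable trace form), or more directly by analysing the projective structure inherited by the $q+1$ roots of a completely split $F_B$ to compute $(q+1)N_{q+1}$ intrinsically. Substituting the resulting equation into the three above yields the claimed closed form $(q^{k-1}-1)/(q^2-1)$ for $k$ odd and $(q^{k-1}-q)/(q^2-1)$ for $k$ even.
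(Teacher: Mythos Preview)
The paper does not prove this theorem; it is quoted from Bluher~\cite{Blu04} and used as a black box, so there is no proof in the paper to compare your attempt against.

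On its own merits, your framework is correct as far as it goes: the parametrisation $B = \psi(\alpha)$, the factorisation $F_B(\alpha_0 + Y) = Y\bigl(Y^q + \alpha_0 Y^{q-1} + c\bigr)$, the reduction to the $\F_q$-affine equation $W^q + B''W = -1/c$, and the resulting restriction of fibre sizes to $\{0,1,2,q+1\}$ are all valid, as are your first three linear relations among $N_0, N_1, N_2, N_{q+1}$.

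The genuine gap is exactly where you flag it: you do not derive the fourth equation. Without it the system is underdetermined, and the two strategies you sketch --- a trace-pairing argument to decide whether $-1/c$ lies in the image of the singular map, or a projective analysis of the $q+1$ roots --- remain plans rather than arguments. This is precisely where the substance of Bluher's proof lies, so as written the proposal is incomplete. A related warning: your claim that ``the parity dichotomy in the statement arises here'' (via $\delta$ in the third equation) cannot be the full explanation, since for $q$ even you correctly note $\delta = 1$ irrespective of $k$, yet the final count still depends on the parity of $k$; the missing fourth relation must therefore also carry parity information.

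If you want a route that sidesteps the image-membership obstacle, note that later in the same section the paper records (citing~\cite{HK10}) that the set $\mathcal{B}$ is exactly the image of $\F_{q^k} \setminus \F_{q^2}$ under $z \mapsto (z - z^{q^2})^{q+1}/(z - z^q)^{q^2+1}$; computing the fibre sizes of this explicit map gives $|\mathcal{B}| = N_{q+1}$ directly and is closer to how such counts are usually established.
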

Using the expression for $B$ as a function of $a,b,c$ one can easily generate triples $(a,b,c)$ for which the l.h.s.\ \emph{always} splits.
In particular, one first computes the set $\mathcal{B}$ of all $B \in \F_{q^k}$ for which $F_B$ splits over $\F_{q^k}$. Then for any 
$a, b \ne a^q$ and $B \in \mathcal{B}$ there is a unique $c$ for which the l.h.s.\ splits. Furthermore, by Theorem~\ref{thm:bluher} there 
are $\approx q^{3k - 3}$ such triples, and for each one whenever the r.h.s.\ splits as well, one has a relation. For $k = 2$ there are no 
such $F_B$, however in this case the set of triples for which the l.h.s.\ splits non-trivially is easily shown to be
\[ \{ (a, a^q, c) \mid a \in \F_{q^2} \text{ and } c \in \F_q, c \ne a^{q+1} \} . \]
In all cases, assuming the r.h.s.\ splits with probability $1/(d_h+1)!$, in order for there to be sufficiently many relations one requires 
that $q^{2k-3} > (d_h + 1)!$. For fixed~$d_h$ and $q \rightarrow \infty$ the cost of computing the logarithms of all the factor base 
elements is heuristically $O(q^{2k + 1})$ (operations in $\Z / (Q-1) \Z$), using sparse (weight~$q$) linear algebra techniques, which is polynomial in 
$\log Q = q^{1 + o(1)}$ for fixed~$k$.

\paragraph{Joux's method.}

The method of Joux~\cite{Jou14} works for fields of the same shape as those in the GGMZ method (although only $k=2$ was used for the
exposition and initial examples), however the field representation is slightly different. In particular, let $\F_{Q} = \F_{q^k}(x) 
= \F_{q^k}[X] / ( I )$, where $I \mid h_1(X) X^q - h_0(X)$ for some $h_0, h_1 \in \F_{q^k}[X]$ of low degree $\le d_h$, and hence 
$x^q = \smash{\frac{h_0(x)} {h_1(x)}}$. Fixing some notation, for any $a \in \F_{q^k}[X]$ we have $a(X)^q = \tilde a(X^q)$ with $\deg \tilde a = \deg a$, 
where the coefficients of~$\tilde a$ are the $q$-th powers of the coefficients of~$a$.

As already mentioned, Joux's method is essentially isomorphic to the GGMZ method and starts with the identity 
\[ \smprod_{\mu \in \F_q} (X - \mu) = X^q - X . \]
Substituting~$X$ by $\smash{\frac{\alpha X + \beta} {\gamma X + \delta}}$ with 
$\alpha, \beta, \gamma, \delta \in \F_{q^k}$ and $\alpha \delta - \beta \gamma \ne 0$,
and multiplying by $(\gamma X + \delta)^{q+1}$ one obtains
\begin{equation}\label{jouxidentity}
  (\gamma X \!+\! \delta) \!\smprod_{\mu \in \F_q}\! \!\big( (\alpha X \!+\! \beta) - \mu (\gamma X \!+\! \delta) \big)\! 
  = (\alpha X \!+\! \beta)^q (\gamma X \!+\! \delta) - (\alpha X \!+\! \beta) (\gamma X \!+\! \delta)^q .
\end{equation}
Observe that the r.h.s.\ of Eq.~(\ref{jouxidentity}) has (up to a scalar factor) the same form as the l.h.s.\ of Eq.~(\ref{basicidentity}), 
and automatically splits over $\F_{q^k}$ by virtue of the l.h.s.\ of Eq.~(\ref{jouxidentity}).
Using the field equation $x^q = \smash{\frac{h_0(x)} {h_1(x)}}$, the r.h.s.\ of Eq.~(\ref{jouxidentity}) becomes
\[ \tfrac{1}{h_1(x)} (\tilde{\alpha} h_0(x) + \tilde{\beta} h_1(x)) (\gamma x + \delta) - 
(\alpha x + \beta) (\tilde{\gamma} h_0(x) + \tilde{\delta} h_1(x)), \]
and if it also splits over $\F_{q^k}$ then one has a relation between the linear elements and $h_1(x)$. As for the number of different 
relations one can obtain in this way, observe that the total number of $(\alpha, \beta, \gamma, \delta)$-transformations is just 
$|\on{PGL}_2(\F_{q^k})| = q^{3k} - q^k$.
However, two transformations will give the same relation (up to multiplication by a scalar in $\F_{q^k}^{\times}$) if there exists an element
of $\on{PGL}_2(\F_q)$ which gives the second transformation when multiplied by the first. Hence the total number of different transformations
is $\approx q^{3k-3}$, just as for the GGMZ method. One should therefore compute a set of coset representatives for the quotient
$\on{PGL}_2(\F_{q^k})/\on{PGL}_2(\F_q)$ to avoid repetitions; the GGMZ method by contrast already does this implicitly.

\subsection{Degree two elimination}\label{sec:individual}

The methods of the previous subsection demonstrate that the factor base can be extremely small and the logarithms of its elements computed 
efficiently. Therefore, our attention now falls on the individual logarithm phase. For the $L(1/2)$ and $L(1/3)$ algorithms this phase had those 
respective complexities, although usually with smaller constants. In contrast, for the new algorithms it becomes the dominant phase.



If one employs the usual descent method for the individual logarithm phase for our setup, then as in~\cite{JL06} the degree two eliminations are 
the bottleneck.  However, the work of GGMZ and Joux shows how to eliminate degree two elements efficiently, and the respective methods 
were developed into the building blocks of the quasi-polynomial descent algorithms.  Thus, due to their importance, we now present them.

\paragraph{The GGMZ degree two elimination method.}

As before consider $\F_{q^{kn}} = \F_{q^k}(x)$ with $y = x^q$ and $x = \smash{\frac{h_0(y)} {h_1(y)}}$.  Let $P(x)$ be a degree two irreducible 
element to be eliminated, i.e., written as a product of linear elements, and let $\tilde{P}(y) = P(x)^q$.
For $a, b, c \in \smash{\F_{q^k}}$ consider Eq.~(\ref{basicidentity}) from the setup for relation generation.
Imposing the condition that $\tilde{P}(y)$ divides the r.h.s.\ of Eq.~(\ref{basicidentity}), we get an $\F_{q^k}$-linear system 
in $(a, b, c)$, whose solution can in general be expressed in terms of~$a$.  In particular, there exists
$u_0, u_1, v_0, v_1 \in \F_{q^k}$ dependent on~$P(x)$ such that $b = u_0 + a u_1$ and $c = v_0 + a v_1$ and hence
\[ x^{q+1} + a x^q + b x + c = \tfrac 1 {h_1(y)} \big( h_0 y + a h_1 y + (u_0 + a u_1) h_0 + (v_0 + a v_1) h_1 \big) . \]
The r.h.s.\ has degree $d_h + 1$, so the cofactor heuristically splits with probability $\approx 1/(d_h-1)!$, while the l.h.s.\ 
heuristically splits with probability $1/q^3$ for randomly chosen~$a$. 

To find such~$a$ more directly, we again use the set $\mathcal{B}$ of splitting $F_B$ polynomials. In particular, as a function of~$a$ we 
transform the l.h.s.\ to obtain an~$F_B$, so that 
\begin{equation}\label{bs}
  B = \frac{( -a^q + u_0 + a u_1)^{q+1}} {(-u_1 a^2 + (v_1 - u_0) a + v_0)^q} .
\end{equation}
Then for each $B \in \mathcal{B}$ we solve Eq.~(\ref{bs}) for~$a$, which one can do by taking the greatest common divisor with $a^{q^k} - a$
for instance, which takes time which is polynomial in $\log Q$.

\paragraph{Joux's degree two elimination method.}

In~\cite{Jou14} Joux used the following extension of the degree one relation generation to compute the logarithms of the degree two elements
in batches. In particular, let $u \in \F_{q^k}$ and substitute~$X$ in~$X^q - X$ by \[ \frac{\alpha (X^2 + u X) + \beta} {\gamma (X^2 + u X)
 + \delta} , \] with $\alpha, \beta, \gamma, \delta \in \F_{q^k}$ and $\alpha \delta - \beta \gamma \ne 0$, and multiply by 
$(\gamma (X^2 + u X) + \delta)^{q+1}$. This gives
\begin{multline}\label{jouxidentity2}
  (\gamma (X^2 \!+ u X) + \delta) \smprod_{\mu \in \F_q} \big( (\alpha(X^2 \!+ u X) + \beta) - \mu (\gamma (X^2 \!+ u X) + \delta) \big) \\
  = (\alpha (X^2 \!\!+\! u X) \!+\! \beta)^q (\gamma (X^2 \!\!+\! u X) \!+\! \delta) \!-\! (\alpha (X^2 \!\!+\! u X) \!+\! \beta) 
  (\gamma (X^2 \!\!+\! u X) \!+\! \delta)^q . \!\!
\end{multline}
Observe that in the l.h.s.\ of Eq.~(\ref{jouxidentity2}) all of the factors are of the form $X^2 + uX + v$ for some $v \in \F_{q^k}$. Replacing 
all occurrences of $X^q$ by $\smash{\frac{h_0(X)} {h_1(X)}}$ on the r.h.s.\ of Eq.~(\ref{jouxidentity2}) gives a low degree expression, which 
if $1$-smooth gives a relation. Once more than $\approx q^k/2$ such relations have been obtained -- which is the expected number of irreducibles 
of this form -- one may take logs and solve the resulting system. Doing this for each $u \in \smash{\F_{q^k}}$ means the logarithms of all degree two 
elements can be computed in time $O(q^{3k+1})$ .

Joux also gave a new elimination method which relies on Gr\"obner basis computations, whose cost increases with the degree.
Balancing the costs of the Gr\"obner basis descent and the classical descent (whose cost decreases with the degree) results in a 
heuristic $L(1/4 + o(1))$ algorithm, which was the first algorithm to break the long-standing $L(1/3)$ barrier.



\subsection{Quasi-polynomial time descent}\label{ssec:qpas}

In this subsection we describe the quasi-polynomial descent algorithms
of BGJT~\cite{BGJT14} and GKZ~\cite{GKZ14b,GKZ15}.

\paragraph{The BGJT quasi-polynomial algorithm.}

Joux's idea for performing degree two elimination in batches can be
generalised to polynomials~$P$ of any degree, which leads to the quasi-%
polynomial descent algorithm of Barbulescu \emph{et al}.~\cite{BGJT14}.  
As before, let the finite field $\F_Q = \F_{q^{kn}}$ be given as
$\F_{q^k}[X] / ( I )$, where the degree~$n$ irreducible polynomial $I
\in \F_{q^k}[X]$ divides $h_1(X) X^q - h_0(X)$ for polynomials $h_0,
h_1 \in \F_{q^k}[X]$ of low degree $\le d_h$.  Suppose an element to
be eliminated is represented by a polynomial $P \in \F_{q^k}[X]$ of
some degree $D < n$.  The goal of each elimination step is to find an
expression for~$P$ in terms of elements of degree less than $D/2$.
Substituting $X$ by~$P$ in Eq.~(\ref{jouxidentity}) one obtains
\[ (\gamma P \!+\! \delta) \!\smprod_{\mu \in \F_q}\! \big( (\alpha P
\!+\!  \beta) \!-\! \mu (\gamma P \!+\! \delta) \big) = (\alpha P
\!+\! \beta)^q (\gamma P \!+\! \delta) \!-\! (\alpha P \!+\! \beta)
(\gamma P \!+\!  \delta)^q , \] for any $\alpha, \beta, \gamma, \delta
\in \F_{q^k}$ with $\alpha \delta - \beta \gamma \ne 0$.  Observe that
all factors in the l.h.s.\ are (up to a scalar) of the form $P + v$
for some $v \in \F_{q^k}$.  Similarly to before, replacing all
occurrences of $X^q$ by $\smash{\frac{h_0(X)} {h_1(X)}}$ on the
r.h.s. gives an expression of relatively low degree $(d_h + 1) D$,
which if $D/2$-smooth is viewed as a relation.  In order to get a
degree $D/2$-smooth expression for~$P$ not involving its
$\F_{q^k}$-translates, one collects more than $\approx q^k$ relations
of the above form, so that the associated linear system (in the
logarithms of the factors $P + v$, for $v \in \F_{q^k}$) is expected
to have full rank, and performs a linear algebra elimination to find
an expression for~$P$ as a $D/2$-smooth product, as desired.

By this process the polynomial~$P$ is rewritten modulo~$I$ as a
product $\prod_i R_i^{e_i}$ of polynomials of degree $\le D / 2$, in
time polynomial in~$q^k$ and~$n$, where the number of factors~$R_i$ is
in~$O(n q^k)$.  Iterating these elimination steps down to the factor
base gives for fixed~ $d_h$ and~$k$ a heuristic descent running time of
\begin{equation}\label{complexity} 
  q^{O(\log n)} = \exp( O(\log q \log n) ) ,
\end{equation}
which is, taking into account the polynomial time relation generation,
also the heuristic total complexity for solving the DLP in $\F_Q$.

In fact, the running time of the algorithm can be improved slightly by
replacing the degree bound $D / 2$ above by $O( D \log \log q / \log q
)$, leading to the heuristic complexity $q^{O(\log n / \log \log q)}$.
While this quasi-polynomial algorithm is asymptotically the fastest,
it has however not yet been used in record computations, see
Section~\ref{ssec:practical}.

\paragraph{Impact on small and medium-small characteristic.}

Having a quasi-polynomial time algorithm for finite fields of the form
$\F_Q = \F_{q^{kn}}$ affects also the DLP in general finite fields of
small or medium-small characteristic.  Indeed, if a DLP in a finite
field $\F_{p^n}$ is considered, we embed this field in $\F_Q$, where
$Q = q^{kn} = (p^n)^{k{\ell}}$ and $q = p^{\ell}$.

In the case of small characteristic, i.e., $p = L_{p^n}(0)$, we have
$\log p = O(\log n)$ and $\log \log p^n = \log n + \log \log p \approx
\log n$.  We let $q = p^{\ell}$ such that $q \ge n - d_h$ and $\log q
= O(\log n)$, hence Eq.~(\ref{complexity}) implies a running time of
$\exp( O(\log \log p^n)^2))$, which is quasi-polynomial in the
logarithm $\log p^n$ of the group order.

If the characteristic is medium-small, i.e., $p = L_{p^n}(\alpha)$,
where $0 < \alpha < 1/3$, we let $q = p$, so that $\log q = \log p =
O( (\log p^n)^{\alpha} (\log \log p^n)^{1-\alpha} )$.  By Eq.~(\ref{complexity})
and observing that $\log n = O(\log \log p^n)$ we get a complexity of
$L_{p^n}(\alpha) ^{O(\log \log p^n)} = \exp( O( (\log p^n)^{\alpha}
(\log \log p^n)^{2 - \alpha}) = L_{p^n}(\alpha + o(1))$, which
improves on the function field sieve having $L(1/3)$-complexity.

\paragraph{Eliminating smoothness heuristics: The GKZ quasi-polynomial algorithm.}

The prototype algorithm~\cite{GKZ14b} and its rigorous analysis~\cite{GKZ15} uses a novel descent strategy, which was developed from 
the GGMZ degree two elimination technique and is thus independent of Joux's degree two elimination and the BGJT algorithm.
The algorithm has quasi-polynomial complexity for the DLP in small characteristic (as well the same complexity as the BGJT algorithm
for $q = L_Q(\alpha)$), and it features a rigorous complexity analysis, which is based on the absolute irreducibility of a certain algebraic 
curve, once an appropriate field representation is found. A remarkable property of the descent method is that it does not require any 
smoothness assumptions about non-uniformly distributed polynomials, in contrast to all previous index calculus algorithms.
The new descent method also has practical advantages and has been used in record computations, see Section~\ref{ssec:practical}.

Consider in $\F_Q = \F_{q^{kn}}$ an element to be eliminated which is represented by an irreducible polynomial $P \in \F_{q^k}[X]$ of degree $2 d$, 
and observe that over the degree~$d$ extension $\F_{q^{kd}}$ this polynomial factors into~$d$ irreducible quadratics.  Applying the degree two
elimination method of GGMZ to any of these quadratics enables one to rewrite the quadratic as a product of linear elements over $\F_{q^{kd}}$.
Then one simply applies the norm map down to $\F_{q^k}$, which takes the linear elements to powers of irreducible elements of degree 
dividing~$d$ and the quadratic element back to the original element~$P$, thus completing its elimination.  
If the target element has degree a power of two then this elimination can be applied recursively, halving the degree (or more) of the 
elements in the descent tree upon each iteration.  For the purpose of building a full DLP algorithm which may be applied to any target 
element, one can use a Dirichlet-type theorem due to Wan~\cite[Thm.~5.1]{Wan97} to ensure that any field element is given by an 
irreducible of degree a power of two only slightly larger than~$n$.

To prove that the degree two elimination step represented in Eq.~(\ref{bs}) always succeeds, the following characterisation of the set of 
$B \in \mathcal{B}$ is used. In particular, by an elementary extension of~\cite[Theorem~5]{HK10} the set~$\mathcal{B}$ is 
the image of $\F_{q^{kd}} \setminus \F_{q^2}$ under the map
\[ z \mapsto \frac{(z - z^{q^2})^{q+1}} {(z - z^q)^{q^2+1}} . \]
By this and Eq.~(\ref{bs}), in order to eliminate~$P$ we need to find $(a, z) \in \F_{q^{kd}} \times (\F_{q^{kd}} \setminus \F_{q^2})$ satisfying
\[ (z - z^{q^2})^{q+1}(-u_1 a^2 \!+\! (v_1 \!-\! u_0) a \!+\! v_0)^q -  (z - z^q)^{q^2+1} (-a^q \!+\! u_0 \!+\! a u_1)^{q+1} = 0 . \]

That there are sufficiently many appropriate $(a, z)$ pairs was proven using the classification of subgroups of $\on{PGL}_2(\F_{q^{kd}})$
and the Weil bound for absolutely irreducible curves.  One also needs to deal carefully with so-called \emph{descent traps}, i.e.,
elements that divide $h_1(X) X^{q^{kd+1}} - h_0(X)$ for $d \ge 0$ which can be shown to be ineliminable in the above manner. 
These considerations led to the following theorem.

\begin{theorem}\label{thm-main}
  Given a prime power $q>61$ that is not a power of~$4$, an integer $k \ge 18$, coprime polynomials $h_0, h_1 \in \F_{q^k}[X]$ of degree at 
  most two and an irreducible degree~$n$ factor~$I$ of ${h_1 X^q - h_0}$, the DLP in $\F_{q^{kn}} \cong \F_{q^k}[X]/(I)$ can be solved in 
  expected time \[ q^{\log_2 n + O( k )} . \]
\end{theorem}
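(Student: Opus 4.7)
The plan is to assemble the theorem from the GGMZ polynomial-time relation generation together with a novel recursive degree-two descent whose success is guaranteed (rather than merely heuristic) once the arithmetic conditions on $q$, $k$ and $h_0, h_1$ are imposed. First, using the field representation $\F_{q^{kn}} \cong \F_{q^k}[X]/(I)$ and the identity $x^q = h_0(x)/h_1(x)$, I would compute the logarithms of the factor base, consisting of $h_1(x)$ together with the $q^k$ linear polynomials, in time polynomial in $q^k$: Theorem~\ref{thm:bluher} supplies $\approx q^{3k-3}$ triples $(a,b,c)$ for which the l.h.s.\ of Eq.~(\ref{basicidentity}) splits completely, and since $\deg h_0, \deg h_1 \le 2$ the r.h.s.\ has degree $\le 3$, so a positive fraction of such triples yields a valid relation and sparse linear algebra recovers all factor-base logarithms.

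Next, for individual logarithms I would implement the degree-halving descent of GKZ. Given an element represented by an irreducible $P \in \F_{q^k}[X]$ of degree $2d$, lift to $\F_{q^{kd}}$, where $P$ factors into $d$ irreducible quadratics $Q_1, \dots, Q_d$; apply the GGMZ degree-two elimination to $Q_1$ over $\F_{q^{kd}}$ to express it as a product of linear factors; then the norm $\F_{q^{kd}} \to \F_{q^k}$ sends each linear factor to an irreducible of degree dividing~$d$, and sends the Galois orbit of $Q_1$ back to~$P$, rewriting $P$ as a product of polynomials of degree at most~$d$. Iterating halves the degree at each level, producing a descent tree of depth $\log_2 n$; at each node the GGMZ step yields $O(q^k)$ children, so the total node count is $q^{\log_2 n + O(k)}$ and each node costs polynomially in~$q^k$. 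To ensure the target element can be represented by an irreducible of degree a power of~$2$ only slightly larger than~$n$, I invoke Wan's Dirichlet-type theorem~\cite[Thm.~5.1]{Wan97}.

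The main obstacle is proving that \emph{every} non-trap degree-two polynomial can be eliminated by the GGMZ step. This reduces to showing that the equation obtained from Eq.~(\ref{bs}) after substituting the parametrization $B = (z - z^{q^2})^{q+1}/(z - z^q)^{q^2+1}$ of the Bluher set has an $\F_{q^{kd}}$-rational point $(a,z)$ with $z \notin \F_{q^2}$. I would first establish that this defines an absolutely irreducible algebraic curve for $q > 61$ with $q$ not a power of~$4$ and $k \ge 18$, by ruling out each possible factorisation pattern via the classification of subgroups of $\on{PGL}_2(\F_{q^{kd}})$; the Weil bound then guarantees enough rational points to produce the desired $(a,z)$. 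Finally, \emph{descent traps}, i.e.\ irreducible factors of $h_1(X) X^{q^{kd+1}} - h_0(X)$ which are provably ineliminable by this procedure, must be handled separately — either circumvented within the tree or avoided at the root by exploiting the freedom left by Wan's theorem in choosing the initial irreducible representative of the target. Combining the $q^{O(k)}$ per-node cost over the $q^{\log_2 n + O(k)}$ nodes with the polynomial relation-generation cost yields the stated expected running time of $q^{\log_2 n + O(k)}$.
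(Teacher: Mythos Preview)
Your outline follows the same architecture as the paper's account of the GKZ algorithm: degree-halving descent via passage to $\F_{q^{kd}}$, GGMZ degree-two elimination there, norming back down, Wan's theorem for the initial power-of-two degree, absolute irreducibility of the auxiliary curve together with the Weil bound and the $\on{PGL}_2$ subgroup classification to guarantee elimination, and separate treatment of traps. Two points need repair.

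First, the branching factor is $O(q)$, not $O(q^k)$. Eliminating a quadratic over $\F_{q^{kd}}$ via Eq.~(\ref{basicidentity}) produces at most $q+1$ linear factors from the l.h.s.\ plus a cofactor of degree at most one; norming these down to $\F_{q^k}$ gives at most $q+2$ irreducibles of degree dividing~$d$. With $O(q)$ children and depth $\log_2 n$ (and $n \le q+2$) the tree has $q^{\log_2 n + O(1)}$ nodes; the $q^{O(k)}$ in the theorem comes from the per-node cost and the surrounding linear algebra, not from the node count. As you wrote it, $O(q^k)$ children over depth $\log_2 n$ would give $q^{O(k\log n)}$, which is far too large.

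Second, the role of $\deg h_i \le 2$ is sharper than you state. In the degree-two elimination the r.h.s.\ has degree $d_h+1 \le 3$, and after dividing out the imposed quadratic factor the cofactor has degree at most one and is therefore \emph{automatically} $1$-smooth; this is precisely what eliminates smoothness heuristics from the descent. Your separate relation-generation step, by contrast, still appeals to a ``positive fraction'' of degree-$3$ right-hand sides splitting, which is not yet rigorous for this non-uniform family. The paper sidesteps this by feeding the proven descent into the Enge--Gaudry--Diem framework of Section~\ref{ssec:egd}, which needs only the ability to decompose arbitrary elements over the factor base and so dispenses with an independent relation-generation phase.
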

That the degree of $h_0, h_1$ is at most two is essential to eliminating smoothness heuristics, since this ensures that the cofactor of the 
r.h.s.\ of Eq.~(\ref{basicidentity}) has degree at most one, and is thus automatically $1$-smooth. 

Thanks to Kummer theory, such $h_1, h_0$ are known to exist when $n = q-1$, which gives the following easy corollary when $m = i k (p^i - 1)$.
\begin{theorem}\label{thm-sequence}
  For every prime~$p$ there exist infinitely many explicit extension fields $\F_{p^m}$ in which the DLP can be solved in expected 
  quasi-polynomial time 
  \[ \exp \big( ( 1 / \log 2 + o(1) ) (\log m)^2 \big) . \]
\end{theorem}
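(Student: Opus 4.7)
I will apply Theorem~\ref{thm-main} to the sequence obtained by fixing $k = 18$, setting $q = p^i$ and $n = q-1$, so that $m = ikn = 18\,i\,(p^i - 1)$ and all the asymptotics are controlled by $q$. Letting $i$ range over the admissible positive integers then produces infinitely many explicit fields $\F_{p^m}$ of the required form.

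\textbf{Verifying the hypotheses.} For all sufficiently large $i$ one has $q = p^i > 61$; if $p$ is odd, $q$ is never a power of $4$, while for $p = 2$ it suffices to restrict to odd $i$. In either case infinitely many admissible $i$ remain. The substantive input is the pair $(h_0, h_1)$: take $h_1 = 1$ and $h_0 = cX$ for some $c \in \F_{q^k}^{\times}$, so that $h_1 X^q - h_0 = X(X^{q-1} - c)$, and pick $c$ so that $X^{q-1} - c$ is irreducible over $\F_{q^k}$. Since $q - 1 \mid q^k - 1$, the field $\F_{q^k}$ contains every $(q-1)$-th root of unity, and the Capelli/Kummer irreducibility criterion then requires only that $c$ is not an $\ell$-th power in $\F_{q^k}^{\times}$ for any prime $\ell \mid q-1$, together with the auxiliary condition $-4c \notin (\F_{q^k}^{\times})^4$ when $4 \mid q-1$. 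Each of these is the complement of a proper subgroup or coset of the cyclic group $\F_{q^k}^{\times}$, so a valid $c$ exists; the associated $I = X^{q-1} - c$ is then an irreducible degree-$n$ factor of $h_1 X^q - h_0$, and all hypotheses of Theorem~\ref{thm-main} are met.

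\textbf{Deriving the asymptotic bound.} Theorem~\ref{thm-main} yields expected running time $q^{\log_2 n + O(k)}$ for the DLP in $\F_{q^{kn}} = \F_{p^m}$. With $k = 18$ fixed and $i \to \infty$ one has $\log m = i \log p + O(\log i) = (1 + o(1))\log q$, while $\log_2 n = \log_2(q - 1) = \log q / \log 2 + O(1)$. Substituting,
\[
q^{\log_2 n + O(k)} \,=\, \exp\bigl((\log q)(\log_2 n + O(1))\bigr) \,=\, \exp\bigl((1/\log 2 + o(1))(\log m)^2\bigr),
\]
which is the bound asserted in the theorem.

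\textbf{Main obstacle.} The only genuinely nontrivial step is the Kummer-theoretic construction of the pair $(h_0, h_1)$, i.e., the existence in every characteristic $p$ and for every admissible large $i$ of an element $c \in \F_{q^k}^{\times}$ satisfying the Capelli avoidance conditions. Once this is in hand, the remaining work is routine asymptotic manipulation together with the bookkeeping for the finitely many excluded small $i$ and for the parity restriction when $p = 2$.
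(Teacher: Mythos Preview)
Your proof is correct and follows essentially the same route as the paper: the paper states the theorem as an ``easy corollary'' of Theorem~\ref{thm-main} by taking $m = i k (p^i - 1)$ and invoking Kummer theory for the case $n = q - 1$, using exactly the construction $h_1 = 1$, $h_0 = cX$ with $c$ chosen so that $X^{q-1} - c$ is irreducible (the paper simply takes $c$ to be a multiplicative generator of $\F_{q^k}^{\times}$, which automatically satisfies your Capelli conditions). Your write-up is more explicit about the hypothesis-checking and the asymptotic bookkeeping, but the underlying argument is identical.
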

Proving the existence of $h_0, h_1$ for general extension degrees as per Theorem~\ref{thm-main} seems to be a hard problem, even if 
heuristically it would appear to be almost certain, and in practice it is very easy to find such polynomials. 

\subsection{Practical considerations}\label{ssec:practical}

While rigorously proving the correctness of a new DLP algorithm is of
theoretical interest, a perhaps more immediate measure of its value
arises from its practical impact.  Furthermore, the challenge of
setting new DLP records often leads to theoretical insights that give
rise to novel or superior algorithms, which may not have been at all
obvious.  In addition, such large-scale computations are of
cryptologic relevance since they allow one to assess the security of
contemporary or future DLP-based cryptosystems.  Hence, the value of
practical considerations should not be underestimated.

\paragraph{Kummer extensions and automorphisms.}

Kummer theory provides us with particularly useful polynomials for the
field representation, as observed in~\cite{Jou13}.  In fact, whenever
$n \mid q^k - 1$ there exists $c \in \F_{q^k}$ such that $X^n - c \in
\F_{q^k}[X]$ is irreducible (one may, for instance, take~$c$ to be a
multiplicative generator).  In particular, for $n = q-1$ we have $I =
X^{q-1} - c \mid X^q - c X = h_1 X^q - h_0$, with $h_1 = 1$, $h_0 = c
X$ of degree $\le d_h = 1$.

Having degree at most one for the polynomials $h_0, h_1$ in the field
representation has some practical advantages for the relation
generation and especially for the individual logarithm phase.
Furthermore, when defining $\F_{q^{kn}}$ by $\F_{q^k}[X] / ( I )$ one
can use factor base preserving automorphisms to reduce the complexity
of the linear algebra step.  In fact, for the $q$-th power Frobenius
one has $(x + a)^q = x^q + a^q = c x + a^q = c ( x + \frac{a^q} {c} )$.
The group generated by the Frobenius automorphism of order~$k n$ acts
on the factor base, effectively reducing the variables of the linear
algebra problem by a factor of about~$k n$.  Similar observations hold
for so-called twisted Kummer extensions, when $n \mid q + 1$.

\paragraph{New discrete logarithm records.}

\begin{table}[t]
  \caption{Discrete logarithm record computations in finite fields of small or 
    medium characteristic.
    Details, as well as further announcements, can be found in the number theory
    mailing list (\url{https://listserv.nodak.edu/cgi-bin/wa.exe?A0=NMBRTHRY}).%
    \vspace{-4mm}}
  \label{tab:records}

  \begin{center}
    \begin{tabular}{c|c|c|c|c}
      \,bitlength\, & \,charact.\, & \,Kummer\, & who/when & running time \\
      & & & & \\[-3.5mm] \hline & & & & \\[-3mm]
      127 & 2 & no & Coppersmith 1984~\cite{Cop84} & ~$L(1/3\,,\, 1.526..1.587)$~ \\
      401 & 2 & no & Gordon, McCurley 1992~\cite{GM93} & $L(1/3\,,\, 1.526..1.587)$ \\
      521 & 2 & no & Joux, Lercier 2001~\cite{JL02} & $L(1/3\,,\, 1.526)$ \\
      607 & 2 & no & Thom\'e 2002 & $L(1/3\,,\, 1.526..1.587)$ \\
      613 & 2 & no & Joux, Lercier 2005 & $L(1/3\,,\, 1.526)$ \\
      556 & \,medium\, & yes & Joux, Lercier 2006~\cite{JL06} & $L(1/3\,,\, 1.442)$ \\
      676 & 3 & no & Hayashi \emph{et al}. 2010~\cite{H+10} & $L(1/3\,,\, 1.442)$ \\
      923 & 3 & no & Hayashi \emph{et al}. 2012~\cite{HSST12} & $L(1/3\,,\, 1.442)$ \\
      1175 & medium & yes & Joux 24 Dec 2012~\cite{Jou13} & $L(1/3\,,\, 1.260)$ \\ 
      1425 & medium & yes & Joux 6 Jan 2013~\cite{Jou13} & $L(1/3\,,\, 1.260)$ \\
      1778 & 2 & yes & Joux 11 Feb 2013~\cite{Jou14} & $L(1/4 + o(1))$ \\
      1971 & 2 & yes & GGMZ 19 Feb 2013~\cite{GGMZ13}~ & $L(1/3\,,\, 0.763)$ \\
      4080 & 2 & yes & Joux 22 Mar 2013~\cite{Jou14} & $L(1/4 + o(1))$ \\
      6120 & 2 & yes & GGMZ 11 Apr 2013~\cite{GGMZ14} & $L(1/4)$ \\
      6168 & 2 & yes & Joux 21 May 2013 & $L(1/4 + o(1))$ \\
      1303 & 3 & no & AMOR 27 Jan 2014~\cite{AMOR14b} & $L(1/4 + o(1))$ \\ 
      4404 & 2 & no & GKZ 30 Jan 2014~\cite{GKZ14a} & $L(1/4 + o(1))$ \\ 
      9234 & 2 & yes & GKZ 31 Jan 2014 & $L(1/4 + o(1))$ \\
      3796 & 3 & no & \,Joux, Pierrot 15 Sep 2014~\cite{JP14}\, & $L(0 + o(1))$ \\ 
      1279 & 2 & no & Kleinjung 17 Oct 2014 &  $L(0 + o(1))$ \\
    \end{tabular}
  \end{center}
\end{table}

\enlargethispage{2mm}

In order to obtain the best performance in practice one typically uses
a combination of elimination techniques to develop the descent
tree. For currently considered bitlengths these consist of: 
\begin{itemize}
\item Continued fraction initial split,
\item Large degree classical descent,
\item quasi-polynomial time descent~\cite{GKZ14b,GKZ15},
\item low degree Gr\"obner Basis descent~\cite{Jou14},
\item degree two elimination~\cite{GGMZ13,Jou14}.
\end{itemize}

The new DLP algorithms have lead to a series of records, see
Table~\ref{tab:records}.  At the time of writing the largest one was
in the field of $2^{9234}$ elements.

\paragraph{Impact on pairing-based cryptography.}

The recent advances in solving the DLP in finite fields of small
characteristic have had a considerable impact on cryptology research,
particularly in the area of identity-based cryptography in which such
DLPs are extremely important. This application was investigated by Adj
\emph{et al.}~\cite{AMOR14a} and Granger~\emph{et al.}~\cite{GKZ14a}.
In particular, in~\cite{GKZ14a} the new methods were improved and
extended to make the algorithms more efficient and more widely
applicable. At the ``128-bit security level'' it was shown that a
common genus one curve offered only~$59$ bits of security (which is
now much reduced thanks to~\cite{GKZ14b}), while a genus two curve
over $\F_{2^{367}}$ with embedding degree $12$ was totally broken.



\section{Summary and outlook}\label{sec:conclusion}

After three decades of relatively little progress in the DLP in small
characteristic fields, in the past three years the area has
transformed dramatically.  Not only are the algorithms much faster,
but the ingredients are much simpler too, which raises the question of
why these techniques were not discovered earlier? While not historians
we proffer two tentative speculations.

Firstly, since Coppersmith's algorithm was so much faster than the
contemporary $L(1/2)$ algorithms, small characteristic fields were
avoided in cryptographic applications and so became less important to
study as attention moved away to larger characteristic fields (this
changed however with the advent of pairing-based cryptography).

Secondly, and perhaps crucially, one might deduce from
Theorems~\ref{thm:cop} and~\ref{thm:ol} that for large prime fields
and small characteristic fields, DLP algorithms that rely on
smoothness heuristics should have the same complexity when index
calculus methods are used.  As well as being true for the above two
algorithms, in which elements are generated uniformly, it is also the
case -- at least heuristically -- for the number field sieve and
function field sieve, in which elements are generated non-uniformly.
Hence algorithms which exploit the additional structure of small
characteristic fields, namely the splitting of $X^q - X$, were simply
not considered.

Looking ahead, there are three natural problems that remain open. The
first is to provide a rigorous quasi-polynomial algorithm for the
small characteristic DLP. The second, perhaps more challenging problem
is to find a polynomial time algorithm for small characteristic,
either heuristic or rigorous. Finally, the third, probably much harder
problem is to develop faster algorithms for medium and large
characteristic. Since there is less structure for prime fields in
particular, it seems that fundamentally new ideas will be required.


\end{document}